\documentclass[10pt,reqno]{amsart}
\usepackage{floatflt}
\usepackage{xypic}
\usepackage{rotating}
\usepackage{bbm}
\usepackage{mathrsfs}
\usepackage{diagbox}
\usepackage{dutchcal}
\usepackage{extarrows}
\usepackage{cite}
\usepackage{amsfonts} 
\usepackage[dvipsnames,usenames]{color}
\usepackage{graphicx,latexsym,bm,amsmath,amssymb,verbatim,multicol,lscape}
\makeatletter
\@namedef{subjclassname@2020}{\textup{2020} Mathematics Subject Classification}
\makeatother
\newtheorem{theorem}{Theorem} [section]

\newtheorem{lemma}[theorem]{Lemma}

\numberwithin{equation}{section}

\def\bew{\begin{widetext}}
\def\eew{\end{widetext}}
\def\be{\begin{equation}}
\def\ee{\end{equation}}
\def\bea{\begin{eqnarray}}
\def\eea{\end{eqnarray}}

\allowdisplaybreaks
\allowdisplaybreaks
\begin{document}
\title[Another proofs of Zagier's and Murakami's formulas]
{Another proofs of Zagier's formula for multiple zeta values 
and Murakami's formula for multiple $t$-values}
\begin{abstract}
Let $l\ge 1$ be an integer. For any multiple index
$\mathbf{s}=(s_1,s_2,\cdots,s_l)\in\mathbb{Z}_{\geq 1}^l$
with $s_l>1$, the multiple zeta value
(MZV for short) is defined by
\begin{align*}
\zeta(s_1,s_2,\cdots,s_l):=\sum_{1\leq k_1<k_2<\cdots<k_l}
\frac{1}{k_1^{s_1}k_2^{s_2}\cdots k_l^{s_l}}
\end{align*}
and the multiple $t$-value is defined by
\begin{align*}
t(s_1,s_2,...,s_l):=\sum_{1\leq k_1<k_2<...<k_l}
\frac{1}{(2k_1-1)^{s_1}(2k_2-1)^{s_2}...(2k_l-1)^{s_l}},
\end{align*}
where if the index is empty, then we define the value $t(\emptyset):=1$.
We denote by $\{a_1,\cdots,a_k\}^d$ the sequence formed by repeating the
sequence $\{a_1,\cdots,a_k\}$ exactly $d$ times. Let
$H(r,s)=\zeta(\{2\}^r,3,\{2\}^s)$ and $T(r,s):=t(\{2\}^r,3,\{2\}^s)$.
Zagier's formula for the multiple zeta values $H(r,s)$ was an important
and key ingredient in the proof of Hoffman's conjecture. In this paper,
with the help of the Lei-Yu-Hong expressions for $H(r,s)$ and $T(r,s)$ as
well as Lupu's identity about rational zeta series involving Riemann
zeta values $\zeta(2n)$ and by establishing some identities about
binomial coefficients and a result about Kronecker symbol and arithmetic
functions, we present another proofs of Zagier's formula stating that for 
any nonnegative integers $r$ and $s$,
\begin{align*}
H(r,s)=2\sum_{k=1}^{r+s+1}(-1)^k\Big[\binom{2k}{2r+2}-\Big(1-\frac{1}{2^{2k}}\Big)
\binom{2k}{2s+1}\Big]\zeta(2k+1)\zeta(\{2\}^{r+s+1-k}),
\end{align*}
and Murakami's formula for the multiple $t$-values $T(r,s)$ asserting that
\begin{align*}
T(r,s)=\sum_{k=1}^{r+s+1}(-1)^{k-1}
\Big[\binom{2k}{2r+1}+\binom{2k}{2s+1}\Big(1-\frac{1}{2^{2k}}\Big)\Big]
\frac{1}{2^{2k}}\zeta(2k+1)
t(\{2\}^{r+s+1-k}).
\end{align*}
\end{abstract}

\author[J.M. Yu]{Jinmin Yu}
\address{Mathematical College, Sichuan University,
Chengdu 610064, P.R. China}
\email{jmyumath@163.com}
\author[S.F. Hong]{Shaofang Hong$^*$}
\address{Mathematical College, Sichuan University,
Chengdu 610064, P.R. China}
\email{sfhong@scu.edu.cn}
\thanks{$^*$S.F. Hong is the corresponding author and
was supported partially by National Science Foundation
of China \#12571007.}
\subjclass[2020]{Primary 11M06, 11M32; Secondary 11B65, 11B68}
\keywords{Multiple zeta value, Zagier's formula for Hoffman elements,
binomial identity, multiple $t$-value, Riemann zeta function}

\maketitle

\section{Introduction}
Let $l$, $k_1,k_2,\ldots, k_l$ be positive integers. For any multiple
index $\mathbf{s}=(s_1,s_2,\cdots,s_l)\in \mathbb{Z}_{\geq 1}^l$ with
$s_l\textgreater 1$, the {\it multiple zeta value (MZV for short)}
is defined by
\begin{align*}
\zeta(s_1,s_2,\cdots,s_l):=\sum_{1\leq k_1<\cdots<k_l}
\frac{1}{k_1^{s_1}k_2^{s_2}\cdots k_l^{s_l}}.
\end{align*}
The theory of MZVs was initially developed at the beginning of 1990s
independently by Hoffman \cite{[H-PJM]} and Zagier \cite{[Z-FECM]}.
The number $k:=s_1+s_2+\cdots+s_l$ is called the {\it weight} and $l$
the {\it depth} (or {\it length}) of MZVs. Obviously, there are
$\binom{k-2}{l-1}$ multiple zeta values of given weight $k$ and
depth $l$ when $0<l<k$. The simplest precise evaluations of MZVs
is given as follows
\begin{align}\label{eq1.1}
\zeta(\mathop{\underbrace{2,2,\ldots,2}}_{d})=\frac{\pi^{2d}}{(2d+1)!}.
\end{align}

A central problem is to investigate the algebraic structure of the
$\mathbb{Q}$-vector space $\mathfrak{Z}_k$ spanned by MZVs of weight
$k$. In this direction, Hoffman \cite{[H-JA]} proposed a conjecture
that every MZV of weight $k$ can be expressed as a $\mathbb{Q}$-linear
combination of those MZVs whose entries are only $2$'s and $3$'s.
This conjecture was later proved by Brown \cite{[B-ANN]} using motivic
method. In particular, Brown showed that the special MZVs
$$H(r,s):=\zeta(\mathop{\underbrace{2,2,\ldots,2}}_{r},3,
\mathop{\underbrace{2,2,\ldots,2}}_{s})$$
can be expressed as a
$\mathbb{Q}$-linear combination of products $\pi^{2a}\zeta(2b+1)$ with
$a+b=r+s+1$. However, Brown's motivic method does not provide the
explicit values of those coefficients in the linear combination of
products mentioned above. This challenge was accomplished by Zagier
\cite{[Z-AM]}, who showed that the following explicit formula:

\begin{theorem}\label{theorem1.1}
For any nonnegative integers $r$ and $s$, we have
\begin{align}\label{eq1.2'}
H(r,s)=2\sum_{k=1}^{r+s+1}(-1)^kc^{(k)}_{r,s}\zeta(2k+1)
\zeta(\mathop{\underbrace{2,2,\ldots,2}}_{r+s+1-k}),
\end{align}
where
$$
c^{(k)}_{r,s}:=\binom{2k}{2r+2}-\Big(1-\frac{1}{2^{2k}}\Big)\binom{2k}{2s+1}.
$$
\end{theorem}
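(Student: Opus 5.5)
The plan follows the route announced in the abstract: start from the Lei--Yu--Hong expression for $H(r,s)$, evaluate the resulting rational zeta series with Lupu's identity, and then match coefficients with the right-hand side of (\ref{eq1.2'}) through binomial identities.

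\emph{Step 1 (Lei--Yu--Hong representation).} This expresses $H(r,s)$ as a finite combination of the values $\zeta(\{2\}^{j})=\pi^{2j}/(2j+1)!$ from (\ref{eq1.1}), with coefficients that are rational zeta series of the form
$$\sum_{n\ge 1}\frac{\zeta(2n)}{4^{n}}\,R_{r,s}(n)$$
with explicit rational weights $R_{r,s}(n)$. The weights are built from binomial coefficients and factors such as $1/(n+m)$ or $1/(2n+2m+1)$. The representation comes from writing $\zeta(\{2\}^r,3,\{2\}^s)$ via the generating function $\prod_{k}(1-x^2/k^2)=\sin(\pi x)/(\pi x)$ and partial fractions.

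\emph{Step 2 (Lupu's identity).} Next, evaluate each such series with Lupu's identity. That identity expresses sums like $\sum_{n}\zeta(2n)/\bigl((n+m)4^{n}\bigr)$ and their odd-denominator analogues as $\mathbb{Q}$-linear combinations of $\zeta(2k+1)\pi^{2(m-k)}$, plus $\log 2$ and rational terms. The $\log 2$ and pure rational contributions must cancel in the total, since the final answer contains only products $\zeta(2k+1)\pi^{2a}$. The factor $1-2^{-2k}$ in $c^{(k)}_{r,s}$ should arise exactly from this step: the odd-denominator series produce $\eta$- or $\lambda$-type values, namely $(1-2^{-2k})\zeta(2k+1)$. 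Substituting gives $H(r,s)$ as a double or triple sum over binomial coefficients multiplying $\zeta(2k+1)\zeta(\{2\}^{r+s+1-k})$.

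\emph{Step 3 (coefficient matching).} For each fixed $k$, show that the accumulated coefficient equals $2(-1)^k c^{(k)}_{r,s}$. This splits into two binomial identities, one producing $\binom{2k}{2r+2}$ and one producing $\binom{2k}{2s+1}$. I would prove each by generating functions: multiply by $x^{2k}$ and sum, so that the inner alternating sums collapse to coefficient extractions from $(1+x)^{2k}\pm(1-x)^{2k}$. Where needed, I would use Vandermonde convolution and the WZ-style telescoping $\binom{n}{j}=\binom{n-1}{j}+\binom{n-1}{j-1}$.

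\emph{Bookkeeping and main obstacle.} The abstract's ``result about Kronecker symbol and arithmetic functions'' suggests a parity indicator $\delta$ appears when interchanging the summation order. It separates the terms coming from even versus odd positions, and I would handle it via a lemma that $\sum_{d}(-1)^{d}\binom{m}{d}$-type sums vanish except on a diagonal. The main obstacle is Step 3: the coefficients from Steps 1--2 are nested alternating sums with rational weights, and showing they collapse to a single binomial coefficient is delicate. My first attempt would be to verify small cases $(r,s)=(0,0),(1,0),(0,1)$ numerically to fix signs, then prove the general identity by induction on $r+s$ using Pascal recursion in both $r$ and $s$.
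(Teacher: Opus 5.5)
Your outline follows the same route as the paper (Lei--Yu--Hong, then Lupu, then binomial identities), but it stops short of a proof. Step 3 is where the coefficient of each $\zeta(2k+1)$ must collapse to $2(-1)^k c^{(k)}_{r,s}$, and that is the whole content of the theorem. You leave it as ``the main obstacle'', to be handled by numerical checks and an unspecified induction on $r+s$. Your argument that the $\log 2$ terms cancel ``since the final answer contains only products'' is also circular.

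Your description of the inputs hides the mechanism. The Lei--Yu--Hong formula is a single series,
\[
H(r,s)=\frac{-4\pi^{2r+2s+2}}{(2r+2)!}\sum_{n\ge 0}\frac{\zeta(2n)}{2^{2n}}\prod_{i=2}^{2s+3}\frac{1}{2n+2r+i}.
\]
The step you need is the partial fraction decomposition
\[
\prod_{i=2}^{2s+3}\frac{1}{2n+2r+i}=\frac{1}{(2s+1)!}\sum_{i=0}^{2s+1}(-1)^i\binom{2s+1}{i}\frac{1}{2n+2r+2+i}.
\]
After it, Lupu's formula for $\sum_{n\ge0}\zeta(2n)/((2n+p)2^{2n})$ applies termwise with $p=2r+2+i$. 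The $\log 2$ terms then cancel because $\sum_i(-1)^i\binom{2s+1}{i}=0$, and no rational terms arise.

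The key missing idea is where the two parts of $c^{(k)}_{r,s}$ come from. In Lupu's formula, the factor $(4^k-1)/(2\pi)^{2k}=(1-2^{-2k})/\pi^{2k}$ multiplies $\binom{p}{2k}$ for every $p$, even or odd. So after interchanging the sums, the coefficient of $(-1)^k(4^k-1)(2k)!\,\zeta(2k+1)/(2\pi)^{2k}$ is, up to the factor $-\tfrac12$, the finite difference $\sum_{i=0}^{2s+1}(-1)^i\binom{2s+1}{i}\binom{2r+2+i}{2k}$. Terms with $2r+2+i<2k$ vanish automatically. Comparing coefficients of $x^{2k}$ in
\[
\sum_{i=0}^{2s+1}(-1)^i\binom{2s+1}{i}(1+x)^{2r+2+i}=(1+x)^{2r+2}\bigl(1-(1+x)\bigr)^{2s+1}=-x^{2s+1}(1+x)^{2r+2}
\]
shows that this difference equals $-\binom{2r+2}{2k-2s-1}$. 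That gives the $(1-2^{-2k})\binom{2k}{2s+1}$ part.

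The $\binom{2k}{2r+2}$ part comes from the extra term $\delta_{\lfloor p/2\rfloor,p/2}\,p!\,(-1)^{p/2}\zeta(p+1)/(2\pi^{p})$. It is present only for even $p$ and involves only $k=p/2$, i.e. $i=2k-2r-2$, so it contributes $\binom{2s+1}{2k-2r-2}$. The Kronecker symbol is thus just this parity indicator plus the sifting rule $\sum_k f(k)\delta_{a,k}=f(a)$, not a vanishing lemma for alternating binomial sums. Your attribution of $1-2^{-2k}$ to the odd-denominator series is incorrect. Neither the even/odd extraction from $(1+x)^{2k}\pm(1-x)^{2k}$ nor a Pascal induction is needed; multiplying by the prefactor and using $\zeta(\{2\}^j)=\pi^{2j}/(2j+1)!$ finishes the proof.
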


The proof of Theorem \ref{theorem1.1} is given by computing the associated
generating functions of both sides in a closed form, and then showing they
are entire functions of exponential growth that agree at sufficiently many
points to force their equality. Later on, his proof was simplified by Li in
\cite{[L-MRL]}. In 2017, Hessami Pilehrood and Hessami Pilehrood \cite{[HHT-JMAA]}
gave an alternative proof of Theorem \ref{theorem1.1}.

In a similar way, Hoffman \cite{[H-CNTP]} defined the multiple $t$-values
(``odd variant'' of MZVs). For any multiple index 
$\mathbf{s}=(s_1,s_2,\cdots,s_l)\in \mathbb{Z}_{\geq 1}^l$,
the {\it multiple $t$-value } is defined by
\begin{align*}
t(s_1,s_2,...,s_l):=\sum_{1\leq k_1<k_2<...<k_l}
\frac{1}{(2k_1-1)^{s_1}(2k_2-1)^{s_2}...(2k_l-1)^{s_l}}
\end{align*}
where if the index is empty, we define the values $t(\emptyset):=1$.
As we stated above, first let us recall the the analogue formula
for the simplest multiple zeta value (\cite{[MK-PSPM],[Z-FM]})
\begin{align}\label{eq1.2}
t(\mathop{\underbrace{2,2,\ldots,2}}_{d})=\frac{\pi^{2d}}{2^{2d}(2d)!}.
\end{align}
By the method of establishing the generating functions, T. Murakami
\cite{[M-MA]} obtained an equivalent version of Theorem \ref{theorem1.1}.
Define $\widetilde{t}(k_1,k_2,\ldots,k_l)=2^{|k|}t(k_1,k_2,\ldots,k_l)$ 
and set
$$K(r,s):=\widetilde{t}(\mathop{\underbrace{2,2,\ldots,2}}_{r},3,
\mathop{\underbrace{2,2,\ldots,2}}_{s}),$$
$K(d):=\widetilde{t}(\{2\}^d)$,
then for all integers $r,s\geq 0$, he found and proved the following formula:
\begin{align}\label{eq1.3}
K(r,s)=2\sum_{k=1}^{r+s+1}(-1)^{k-1}d^{(k)}_{r,s}K(r+s-k+1)\zeta(2k+1),
\end{align}
where
$$
d^{(k)}_{r,s}:=\binom{2k}{2r+1}+\Big(1-\frac{1}{2^{2k}}\Big)\binom{2k}{2s+1}.
$$
Moreover, let
$$T(r,s):=t(\mathop{\underbrace{2,2,\ldots,2}}_{r},3,
\mathop{\underbrace{2,2,\ldots,2}}_{s}),$$
then \eqref{eq1.3} is equivalent to
\begin{theorem}\label{theorem1.2}
For any nonnegative integers $r$ and $s$, we have
\begin{align}\label{eq1.5}
T(r,s)=\sum_{k=1}^{r+s+1}(-1)^{k-1}d^{(k)}_{r,s}\frac{1}{2^{2k}}\zeta(2k+1)
t(\mathop{\underbrace{2,2,\ldots,2}}_{r+s+1-k}).
\end{align}
\end{theorem}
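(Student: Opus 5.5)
The plan is to derive Theorem \ref{theorem1.2} directly from Murakami's formula \eqref{eq1.3} by undoing the normalization $\widetilde{t}=2^{|k|}t$. Here $|k|$ denotes the weight, so the only work is to track the powers of $2$ on both sides of \eqref{eq1.3}; no analytic input should be needed.

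\emph{Step 1: the left-hand side.} The index $(\{2\}^r,3,\{2\}^s)$ has weight $2r+3+2s$. Hence
\begin{align*}
K(r,s)=2^{2r+2s+3}\,T(r,s).
\end{align*}

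\emph{Step 2: the right-hand side.} For $d\ge 0$ the index $\{2\}^d$ has weight $2d$, so $K(d)=2^{2d}\,t(\{2\}^d)$. This holds also for $d=0$ with $K(0)=t(\emptyset)=1$. Taking $d=r+s+1-k$, which lies in $[0,r+s]$ for $1\le k\le r+s+1$, gives
\begin{align*}
K(r+s-k+1)=2^{2(r+s+1-k)}\,t(\{2\}^{r+s+1-k}).
\end{align*}

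\emph{Step 3: substitute and compare.} Inserting Steps 1 and 2 into \eqref{eq1.3} yields
\begin{align*}
2^{2r+2s+3}T(r,s)=\sum_{k=1}^{r+s+1}(-1)^{k-1}d^{(k)}_{r,s}\,2^{2r+2s+3-2k}\,\zeta(2k+1)\,t(\{2\}^{r+s+1-k}).
\end{align*}
Dividing by $2^{2r+2s+3}$ leaves exactly the factor $2^{-2k}$ in each summand, which is \eqref{eq1.5}. Every step is reversible, so this also confirms that \eqref{eq1.3} and \eqref{eq1.5} are equivalent.

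The main point to watch is the bookkeeping of exponents. One must correctly combine the factor $2$ in front of the sum in \eqref{eq1.3} with the $2^{2(r+s+1-k)}$ coming from $K(r+s+1-k)$, so that the result is $2^{2r+2s+3-2k}$. One must also check the boundary term $k=r+s+1$, where the empty index appears.

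If instead one wants a proof of \eqref{eq1.5} independent of Murakami's, as the abstract indicates, I would proceed differently. I would start from the Lei--Yu--Hong series expression for $T(r,s)$. I would expand its summand as a rational function of the summation variable and sum it using Lupu's identity for rational zeta series in $\zeta(2n)$. Then I would rewrite the resulting $\zeta(2n)$ products via \eqref{eq1.2} as $t(\{2\}^m)$. The hard step there would be the final binomial-coefficient identity matching the coefficients with $d^{(k)}_{r,s}$.
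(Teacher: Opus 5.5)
Your exponent bookkeeping in Steps 1--3 is correct. The factor $2\cdot 2^{2(r+s+1-k)}=2^{2r+2s+3-2k}$, divided by $2^{2r+2s+3}$, leaves $2^{-2k}$, and the term $k=r+s+1$ is fine since $K(0)=t(\emptyset)=1$. But this only establishes the remark in the introduction that \eqref{eq1.3} and \eqref{eq1.5} are equivalent. As a proof of Theorem~\ref{theorem1.2} it is empty: \eqref{eq1.3} \emph{is} Murakami's formula, i.e.\ the statement you are asked to prove, written with $\widetilde t=2^{|k|}t$. So you have derived the theorem by assuming it. All the mathematical content---that $T(r,s)$ equals this particular combination of $\zeta(2k+1)\,t(\{2\}^{r+s+1-k})$---is imported. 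The paper's purpose is precisely to prove this without appealing to Murakami's generating-function argument.

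Your fallback sketch follows the route the paper actually takes, but it stops exactly where the work begins. The paper writes $T(r,s)=\frac{-2}{(2r+1)!}(\pi/2)^{2r+2s+2}C_{r,s}$ with $C_{r,s}=\sum_{n\ge 0}\frac{\zeta(2n)}{2^{2n}}\prod_{i=1}^{2s+2}\frac{1}{2n+2r+i}$ (Lemma~\ref{theorem1.4}). It splits the product by partial fractions as $\frac{1}{(2s+1)!}\sum_{i=1}^{2s+2}(-1)^{i-1}\binom{2s+1}{i-1}\frac{1}{2n+2r+i}$ (Lemma~\ref{lemma2.1}), and applies Lupu's Lemma~\ref{lemma2.3} with $p=2r+i$. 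Three things then have to be seen:
\begin{itemize}
\item The $\log 2$ contributions cancel because $\sum_{i}(-1)^{i}\binom{2s+1}{i}=0$.
\item Lupu's formula carries an extra $\zeta(p+1)$ term only for even $p$. For $i=2(k-r)$ these terms collect into $\frac12\sum_k\binom{2s+1}{2k-2r-1}\frac{(2k)!(-1)^k\zeta(2k+1)}{\pi^{2k}}$, which is the sole source of the $\binom{2k}{2r+1}$ half of $d^{(k)}_{r,s}$.
\item After interchanging the sums over $i$ and $k$, the main terms collapse via $\sum_{i=0}^{2s+1}(-1)^i\binom{2s+1}{i}\binom{2r+1+i}{2k}=-\binom{2r+1}{2k-2s-1}$. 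This is the coefficient of $x^{2k}$ in $(1+x)^{2r+1}\bigl(1-(1+x)\bigr)^{2s+1}=-x^{2s+1}(1+x)^{2r+1}$ (Lemma~\ref{lemma2.2}), and it produces the $\bigl(1-2^{-2k}\bigr)\binom{2k}{2s+1}$ half.
\end{itemize}
Only then are the factors $\pi^{2m}/(2^{2m}(2m)!)$ rewritten as $t(\{2\}^m)$ by \eqref{eq1.2}; it is these, not ``$\zeta(2n)$ products'', that get converted. Your sketch identifies none of the partial-fraction step, the cancellation of $\log 2$, the different origins of the two halves of $d^{(k)}_{r,s}$, or the collapsing identity. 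The ``hard step'' you anticipate is a two-line generating-function argument once the sum is organized this way. As it stands your sketch is a plan rather than a proof.
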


In a different approach, Lupu \cite{[L-MZ]} provided elementary proofs
of Zagier's formula (\ref{eq1.2'}) for the multiple zeta values and
Murakami's formula (\ref{eq1.5}) for the multiple $t$-values for the
case $s=0$. In the process, Lupu \cite{[L-MZ]} showed that
\begin{align*}
H(r,0)=\frac{-4\pi^{2r+2}}{(2r+2)!}\sum_{n=0}^{\infty}
\frac{\zeta(2n)}{(2n+2r+2)(2n+2r+3)\cdots(2n+2r+3)2^{2n}}
\end{align*}
and
\begin{align*}
T(r,0)=\frac{-2}{(2r+1)!}\left(\frac{\pi}{2}\right)^{2r+2}
\sum_{n=0}^{\infty}\frac{\zeta(2n)}{(2n+2r+1)(2n+2r+2)\cdots(2n+2r+2)2^{2n}}.
\end{align*}
Meanwhile, for
the general cases $s>0$, Lupu \cite{[L-MZ]} proposed two conjectures
stating that for any nonnegative integers $r$ and $s$, we have
\begin{align*}
H(r,s)=\frac{-4\pi^{2r+2s+2}}{(2r+2)!}\sum_{n=0}^{\infty}
\frac{\zeta(2n)}{(2n+2r+2)(2n+2r+3)\cdots(2n+2r+2s+3)2^{2n}}
\end{align*}
and
\begin{align*}
T(r,s)=\frac{-2}{(2r+1)!}\left(\frac{\pi}{2}\right)^{2r+2s+2}
\sum_{n=0}^{\infty}\frac{\zeta(2n)}{(2n+2r+1)(2n+2r+2)\cdots(2n+2r+2s+2)2^{2n}}.
\end{align*}
These two conjectures of Lupu were very recently proved to be true
by Lei, Yu and Hong \cite{[LYH]}. On the other hand, Lai, Lupu,
and Orr \cite{[LLO-PAMS]} presented elementary and direct proofs of
Zagier's formula for $H(r, s)$ and Murakami's formula for $T(r, s)$.

In this paper, our main goal is to give another proofs of Zagier's 
formula (\ref{eq1.2'}) for $H(r,s)$ and its odd variant
(i.e. Murakami's formula (\ref{eq1.5}) for $T(r,s)$). In fact, with the
help of the Lei-Yu-Hong expressions for $H(r,s)$ and $T(r,s)$ as well
as Lupu's result about rational zeta series involving Riemann zeta
values $\zeta(2n)$, and by establishing some identities about binomial
coefficients and a result about Kronecker symbol and arithmetic functions,
we present new elementary proofs of Zagier's formula (\ref{eq1.2'}) for
$H(r,s)$ and Murakami's formula (\ref{eq1.5}) for $T(r,s)$.

This paper is organized as follows. In Section 2 we supply several
preliminary lemmas that are needed in the proofs of Theorems
\ref{theorem1.1} and \ref{theorem1.2}. Sections 3 and 4 are
devoted to the proofs of Theorems \ref{theorem1.1} and
\ref{theorem1.2}, respectively.

\section{Preliminary lemmas}
In this section, we give some preliminary lemmas that are needed
in the proofs of our main results. We begin with the Lei-Yu-Hong
solutions \cite{[LYH]} to Lupu's conjectures \cite{[L-MZ]}.

\begin{lemma} \label{theorem1.3} (Theorem 1.4 of \cite{[LYH]})
For nonnegative integers $r$ and $s$, we have
\begin{align*}
H(r,s)=\frac{-4\pi^{2r+2s+2}}{(2r+2)!}\sum_{n=0}^{\infty}
\frac{\zeta(2n)}{(2n+2r+2)(2n+2r+3)\cdots(2n+2r+2s+3)2^{2n}}
\end{align*}
\end{lemma}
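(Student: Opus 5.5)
The plan is to prove Lemma \ref{theorem1.3} directly from an integral representation of $H(r,s)$, without Zagier's formula (which is what the paper later derives from the lemma). The key tool is the generating function
\[
\prod_{k\ge1}\Big(1-\frac{x^2}{k^2}\Big)=\frac{\sin\pi x}{\pi x}
=\sum_{d\ge0}(-1)^d\zeta(\{2\}^d)x^{2d}.
\]
A single entry $3$ inserted among $2$'s can be produced by one extra partial-fraction factor $1/k^3$ at the marked position.

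First I split $H(r,s)$ at the position $m$ of the index carrying the exponent $3$:
\[
H(r,s)=\sum_{m\ge1}\frac{1}{m^3}\,e_r(m)\,E_s(m).
\]
Here $e_r(m)$ is the elementary symmetric sum of degree $r$ in $\{1/k^2: k<m\}$, and $E_s(m)$ is the same sum of degree $s$ over $k>m$.

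Next I encode the two symmetric sums through a truncated product and a tail product. For a finite Weierstrass product I will use
\[
\prod_{k<m}\Big(1+\frac{x^2}{k^2}\Big)=\frac{\Gamma(m+ix)\Gamma(m-ix)}{\Gamma(m)^2(1+x^2)\cdots}.
\]
Rather than manipulate Gamma functions, I prefer the known integral representations used by Lupu for $s=0$. There, $H(r,0)$ is written as $\int_0^{\pi/2}$ of $x^{2r+1}$ times a logarithm of $\sin$ or $\cos$, via $\log(2\sin x)=-\sum_n \cos(2nx)/n$ and the fact that $\zeta(\{2\}^r)$ arises from $\int x^{2r}$. For general $s$, I aim for an analogous representation
\[
H(r,s)=c_{r,s}\int_0^{\pi/2} x^{2r+1}\Big(\frac{\pi}{2}-x\Big)^{2s+1}\cdots\,\log(\cdot)\,dx,
\]
or better a polynomial weight $P_{r,s}(x)$. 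I would obtain it by computing the generating function $\sum_{r,s}H(r,s)(-1)^{r+s}x^{2r}y^{2s}$ as a single series over $m$ and recognising the summand through the Fourier expansion.

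Having an integral of the form $\int_0^{1/2}P(t)\log(\sin \pi t)\,dt$ or similar, I then expand
\[
\log\frac{\sin\pi t}{\pi t}=-\sum_{n\ge1}\frac{\zeta(2n)}{n}t^{2n}.
\]
This is exactly the mechanism that produces $\zeta(2n)$ and powers $2^{-2n}$ after evaluating at $t=1/2$. Integrating term by term against $t^{2r+1}(1-\cdots)$ with repeated integration by parts yields denominators $(2n+2r+2)(2n+2r+3)\cdots(2n+2r+2s+3)$. These $2s+2$ consecutive factors come from a $(2s+1)$-fold iterated integral $\int_0^{t}\frac{(t-u)^{2s+1}}{(2s+1)!}\cdots$. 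The leftover $\log(\pi t)$ part and the $n=0$ term ($\zeta(0)=-1/2$) must then be matched separately to an elementary constant.

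The main obstacle is the middle step: producing the correct kernel for general $s$, that is, showing that the tail sum $E_s(m)$ corresponds exactly to a $(2s+1)$-fold repeated integration. I would first verify this by induction on $s$ with $r$ fixed. To do so I would use the shuffle-type relation obtained by adding one $2$ to the right, namely $\zeta(\dots,3,\{2\}^{s})$ compared with $\zeta(\dots,3,\{2\}^{s-1})\zeta(2)$ minus corrections. I would check that the right-hand side of Lemma \ref{theorem1.3} satisfies the same recursion, using the partial-fraction identity relating $1/\prod_{j=0}^{2s+1}(N+j)$ to $1/\prod_{j=0}^{2s-1}(N+j)$. The base case $s=0$ is Lupu's identity quoted above.
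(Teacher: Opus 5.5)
The paper gives no proof of this statement. It is imported verbatim as Theorem~1.4 of Lei--Yu--Hong, so your proposal has to stand on its own, and as written it does not: the one step that carries all the content is missing.

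The parts you do spell out are routine. The series $\sum_{n\ge0}\zeta(2n)t^{2n}=-\frac{\pi t}{2}\cot \pi t$ converges uniformly on $[0,\frac12]$, and $\int_0^{1/2}t^{2n+2r+1}(\frac12-t)^{2s+1}\,dt=2^{-(2n+2r+2s+3)}\frac{(2n+2r+1)!\,(2s+1)!}{(2n+2r+2s+3)!}$. Hence the lemma is \emph{equivalent} to the single identity
\[
H(r,s)=\frac{2^{2r+2s+4}}{\pi\,(2r+2)!\,(2s+1)!}\int_0^{\pi/2}x^{2r+2}\Big(\frac{\pi}{2}-x\Big)^{2s+1}\cot x\,dx .
\]
For $r=s=0$ the right-hand side is indeed $\frac{8}{\pi}\cdot\frac{\pi}{8}\zeta(3)$. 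Everything therefore hinges on deriving this representation from $H(r,s)=\sum_m m^{-3}e_r(m)E_s(m)$. That is exactly what your plan leaves blank (``$\cdots\log(\cdot)$'', ``or better a polynomial weight $P_{r,s}$'', ``recognising the summand through the Fourier expansion''), and you flag it yourself as the main obstacle. Note also that the polynomial kernel must be paired with $\cot$ (the derivative of $\log\sin$), not with $\log\frac{\sin\pi t}{\pi t}$. Expanding the latter gives $\zeta(2n)/n$ and no $n=0$ term, so term-by-term integration against a polynomial does not produce the target series unless you first integrate by parts.

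Your fallback induction on $s$ with $r$ fixed cannot be carried out. The harmonic product gives
\[
\zeta(2)\,H(r,s-1)=(r+1)\,H(r+1,s-1)+s\,H(r,s)+R,
\]
where $R$ consists of merged terms such as $\zeta(\{2\}^a,4,\{2\}^b,3,\{2\}^{s-1})$, $\zeta(\{2\}^r,5,\{2\}^{s-1})$ and $\zeta(\{2\}^r,3,\{2\}^c,4,\{2\}^d)$. The shuffle product is worse, since it produces entries equal to $1$.
\begin{itemize}
\item Isolating $H(r,s)$ requires $H(r+1,s-1)$, so $r$ cannot be held fixed.
\item It also requires the values in $R$, which lie outside the family $H(\cdot,\cdot)$ and for which you have no $\zeta(2n)$-series.
\end{itemize}
So there is no closed recursion for the right-hand side to satisfy. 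The partial-fraction relation between products of $2s+2$ and $2s$ consecutive factors has nothing to be matched against.

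To close the gap you need an actual evaluation. One option is the two-variable generating function $\sum_{m}m^{-3}\prod_{k<m}(1-x^2/k^2)\prod_{k>m}(1-y^2/k^2)$; another is a direct proof of the displayed integral identity. Either way it must not use Zagier's formula, since the paper derives that formula from this lemma.
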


\begin{lemma}\label{theorem1.4}(Theorem 1.5 of \cite{[LYH]})
For nonnegative integers $r$ and $s$, we have
\begin{align*}
T(r,s)=\frac{-2}{(2r+1)!}\left(\frac{\pi}{2}\right)^{2r+2s+2}
\sum_{n=0}^{\infty}\frac{\zeta(2n)}{(2n+2r+1)(2n+2r+2)\cdots(2n+2r+2s+2)2^{2n}}.
\end{align*}
\end{lemma}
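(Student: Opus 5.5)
The plan is to turn the right-hand side into an elementary integral with a cotangent kernel and then match generating functions in $r$ and $s$ on both sides. Step one removes the rational factors through the Beta integral
\begin{align*}
\frac{1}{(m+a)(m+a+1)\cdots(m+b)}=\frac{1}{(b-a)!}\int_0^1 u^{m+a-1}(1-u)^{b-a}\,du .
\end{align*}
We take $m=2n$, $a=2r+1$ and $b=2r+2s+2$. Interchanging sum and integral (after splitting off the term $\zeta(0)=-\tfrac12$, and using dominated convergence on $[0,1]$) and using the classical expansion
\begin{align*}
\sum_{n\ge 0}\zeta(2n)z^{2n}=-\frac{\pi z}{2}\cot(\pi z)\qquad(|z|<1)
\end{align*}
at $z=u/2$, the right-hand side of Lemma \ref{theorem1.4} becomes
\begin{align*}
\frac{1}{(2r+1)!\,(2s+1)!}\left(\frac{\pi}{2}\right)^{2r+2s+3}\int_0^1 u^{2r+1}(1-u)^{2s+1}\cot\Big(\frac{\pi u}{2}\Big)\,du .
\end{align*}
The claim is therefore equivalent to an integral representation of $T(r,s)$ with this kernel.

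Step two forms generating functions. On the integral side, multiply by $(-1)^{r+s}x^{2r+1}y^{2s+1}$ (suitably normalized by the powers of $\pi/2$) and sum over $r,s\ge0$. This gives
\begin{align*}
G(x,y)=\int_0^1 \sin\Big(\frac{\pi x u}{2}\Big)\sin\Big(\frac{\pi y(1-u)}{2}\Big)\cot\Big(\frac{\pi u}{2}\Big)\,du .
\end{align*}
On the $T$ side, write $T(r,s)=\sum_{k\ge1}(2k-1)^{-3}e_r(k)\,e'_s(k)$. Here $e_r(k)$ is the $r$-th elementary symmetric function of $\{(2j-1)^{-2}\}_{j<k}$, and $e'_s(k)$ is the $s$-th elementary symmetric function of $\{(2j-1)^{-2}\}_{j>k}$. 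The generating function is then
\begin{align*}
F(x,y)=\sum_{k\ge1}\frac{1}{(2k-1)^3}\prod_{j<k}\Big(1-\frac{x^2}{(2j-1)^2}\Big)\prod_{j>k}\Big(1-\frac{y^2}{(2j-1)^2}\Big).
\end{align*}
Using $\prod_{j\ge1}(1-y^2/(2j-1)^2)=\cos(\pi y/2)$, the tail product equals $\cos(\pi y/2)$ divided by a finite product. Each finite product is a ratio of Gamma values, such as $\Gamma(k-\tfrac12\pm\tfrac x2)/\Gamma(\tfrac12\pm\tfrac x2)$.

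Step three evaluates $G$ in closed form by expanding $\cot(\pi u/2)$ into its partial fractions $\frac{2}{\pi}\sum_{m\in\mathbb Z}\frac{1}{u-2m}$. We then integrate each trigonometric product term by term; this produces sums indexed by odd denominators $2k-1$ after the products of sines are rewritten as cosines of $\pi(\cdot)/2$. Alternatively, partial fractions in $k$ put $F$ in the same shape. The target is to show that $F$ and $G$ are both equal to a single expression of the form
\begin{align*}
\sum_{k\ge1}\frac{A_k(x,y)}{(2k-1)\big((2k-1)^2-x^2\big)\big((2k-1)^2-y^2\big)}
\end{align*}
with trigonometric coefficients $A_k(x,y)$, i.e.\ a Murakami-type closed form.

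I expect this matching of $F$ with $G$ to be the main obstacle, since the Gamma-ratio sum in $F$ does not telescope obviously. My first attempt would be to verify the identity $F=G$ on the diagonal specialization used by Lupu for $s=0$ and then extend it. Failing that, I would show that $F-G$ is an entire function of exponential type in $x$ for fixed $y$, bounded suitably, that vanishes at all odd integers $x=2m-1$. At those points the products truncate and both sides reduce to finite sums that can be compared directly. A Carlson-type argument would then force $F\equiv G$, and comparing the coefficient of $x^{2r+1}y^{2s+1}$ yields Lemma \ref{theorem1.4}.
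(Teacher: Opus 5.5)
The paper gives nothing to compare against here. This lemma is imported verbatim as Theorem 1.5 of \cite{[LYH]} and never proved, so your argument has to stand on its own, and it does not yet.

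Your Step one is correct. The Beta integral and $\sum_{n\ge0}\zeta(2n)z^{2n}=-\frac{\pi z}{2}\cot(\pi z)$ turn the right-hand side into $\frac{(\pi/2)^{2r+2s+3}}{(2r+1)!(2s+1)!}\int_0^1u^{2r+1}(1-u)^{2s+1}\cot\frac{\pi u}{2}\,du$. For $r=s=0$ this gives $\frac78\zeta(3)=t(3)$, as it should. With your $F$ and $G$, the identity to be proved is $xyF(x,y)=\frac{\pi}{2}G(x,y)$, not $F=G$.

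That identity is the entire content of the lemma, and you do not prove it. Combined with the computation in Section 4, the lemma is equivalent to Murakami's formula \eqref{eq1.5}, so you are in effect proposing to reprove Murakami's theorem. Your Step three sketch does not do what you say: integrating the trigonometric product against the partial fractions $1/(u-2m)$ gives sine and cosine integrals, not rational terms with odd denominators. And ``check Lupu's case $s=0$, then extend'' is not an argument.

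Your fallback also fails as stated. Put $h(x)=xyF(x,y)-\frac{\pi}{2}G(x,y)$. A priori both terms have type exactly $\pi/2$ along the imaginary axis: $|P_k(x)|\le\cosh(\pi|x|/2)$, with near equality at $x=it$ for large $k$. Meanwhile $\cos(\pi x/2)$ vanishes at every odd integer and has that same type. After substituting $x=2z+1$ you sit exactly on Carlson's excluded borderline $c=\pi$, so vanishing at the odd integers cannot by itself force $h\equiv0$. A repair exists, but it needs three inputs you have not identified:
\begin{itemize}
\item the bound $|h(x)|\ll(1+|x|)e^{\pi|x|/2}$ in every direction;
\item the decay $h(it)/\cosh(\pi t/2)\to0$;
\item an indicator or Phragm\'en--Lindel\"of argument showing that the entire function $h(x)/\cos(\pi x/2)$ has minimal exponential type, and hence, being $o(1)$ on the imaginary axis, vanishes.
\end{itemize}

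Moreover, the values at $x=2m-1$ cannot be compared ``directly''. Use $\sin((2m-1)\theta)\cot\theta=2\sum_{l=0}^{m-2}\cos((2l+1)\theta)+\cos((2m-1)\theta)$ and divide by $y\cos(\pi y/2)$. Vanishing of $h$ at $x=2m-1$ is then equivalent to
\[
(2m-1)\sum_{k=1}^{m}\frac{\prod_{j<k}\bigl(1-\frac{(2m-1)^2}{(2j-1)^2}\bigr)}{(2k-1)^3\prod_{j\le k}\bigl(1-\frac{y^2}{(2j-1)^2}\bigr)}
=\sum_{l=1}^{m-1}\frac{2}{(2l-1)^2-y^2}+\frac{1}{(2m-1)^2-y^2}
\]
for every $m\ge1$. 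This is a family of terminating hypergeometric summations. It does hold for small $m$, but it still has to be proved in general. Until these pieces are supplied, your proposal is a reformulation of the lemma rather than a proof of it.
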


\begin{lemma}\label{lemma2.1}
For any positive integers $m$ and $j$ with $m>j$, we have
\begin{align}\label{eq2.1}
\prod_{i=j}^{m}\frac{1}{2n+2r+i}
=\frac{1}{(m-j)!}\sum_{i=j}^{m}(-1)^{i-j}\binom{m-j}{i-j}\frac{1}{2n+2r+i}.
\end{align}
\end{lemma}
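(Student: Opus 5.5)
This is the classical partial fraction decomposition of $1/\prod(x+i)$, with $x=2n+2r$ a fixed quantity (in the intended use $n,r\ge 0$, so no denominator vanishes). Write $x:=2n+2r$ and reindex by $\ell=i-j$, $N:=m-j\ge 1$, $y:=x+j$. Then \eqref{eq2.1} becomes
\begin{align*}
\prod_{\ell=0}^{N}\frac{1}{y+\ell}=\frac{1}{N!}\sum_{\ell=0}^{N}(-1)^{\ell}\binom{N}{\ell}\frac{1}{y+\ell},
\end{align*}
which I will prove as an identity of rational functions in $y$.

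The left side is $1/P(y)$ with $P(y)=\prod_{\ell=0}^{N}(y+\ell)$, whose roots $-\ell$, $0\le\ell\le N$, are simple. Partial fractions give $1/P(y)=\sum_{\ell=0}^{N}A_\ell/(y+\ell)$ with $A_\ell=1/P'(-\ell)=\prod_{k\ne\ell}(k-\ell)^{-1}$. For $k<\ell$ the factors $k-\ell$ run over $-1,\dots,-\ell$, with product $(-1)^\ell\ell!$. For $k>\ell$ they run over $1,\dots,N-\ell$, with product $(N-\ell)!$. Hence
\begin{align*}
A_\ell=\frac{(-1)^\ell}{\ell!\,(N-\ell)!}=\frac{(-1)^\ell}{N!}\binom{N}{\ell},
\end{align*}
which is exactly the claimed coefficient. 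Substituting back $y=2n+2r+j$ and $\ell=i-j$ gives \eqref{eq2.1}.

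As an alternative I could argue by induction on $m$, using $\frac{1}{(y+a)(y+b)}=\frac{1}{b-a}\bigl(\frac{1}{y+a}-\frac{1}{y+b}\bigr)$ together with Pascal's rule. The residue computation is shorter, though.

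The only step needing care is the sign and factorial bookkeeping in the product $\prod_{k\ne\ell}(k-\ell)$. Everything else is routine.
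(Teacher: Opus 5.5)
Your proof is correct, but it takes a different route from the paper. The paper proves \eqref{eq2.1} by induction on $m$. It multiplies the inductive hypothesis by $1/(2n+2r+m)$ and splits each product $\frac{1}{(2n+2r+i)(2n+2r+m)}$ via $\frac{1}{m-i}\bigl(\frac{1}{2n+2r+i}-\frac{1}{2n+2r+m}\bigr)$. It then absorbs the factor $\frac{1}{m-i}$ into the factorials to produce $\binom{m-j}{i-j}/(m-j)!$. Finally, it finds the coefficient of $\frac{1}{2n+2r+m}$ by evaluating $\sum_{i=j}^{m-1}(-1)^{i-j}\binom{m-j}{i-j}=(-1)^{m-j-1}$ with a Pascal-rule telescoping. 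This is essentially the alternative you mention in your last paragraph.

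Your cover-up computation $A_\ell=1/P'(-\ell)$ gets all coefficients at once. It also shows directly why $\frac{(-1)^\ell}{\ell!\,(N-\ell)!}$ appears, and your bookkeeping $\prod_{k\ne\ell}(k-\ell)=(-1)^\ell\,\ell!\,(N-\ell)!$ is right. Its one external input is the existence and uniqueness of the partial fraction decomposition of a proper rational function with simple poles. The paper's induction avoids that theorem, using only the two-term splitting and an alternating binomial sum, at the cost of more algebra.

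Both arguments really prove the identity for an indeterminate in place of $2n+2r$. Your remark that $2n+2r+j\ge 1$, so no denominator vanishes, is exactly what justifies specializing it; the paper leaves this point implicit.
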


\begin{proof}
We use the induction on $m$ to show Lemma 2.3.

First, let $m=j+1$. Then the left-hand side of \eqref{eq2.1} equals
\begin{align*}
\frac{1}{(2n+2r+j)(2n+2r+j+1)}=\frac{1}{2n+2r+j}-\frac{1}{2n+2r+j+1},
\end{align*}
and the right-hand side of \eqref{eq2.1} is equal to
\begin{align*}
\sum_{i=j}^{j+1}(-1)^{i-j}\binom{1}{i-j}\frac{1}{2n+2r+i}
=\frac{1}{2n+2r+j}-\frac{1}{2n+2r+j+1}.
\end{align*}
Hence \eqref{eq2.1} is true when $m=j+1$.

Now let $m>j+1$. Assume that  \eqref{eq2.1} is true for the $m-1$ case.
In what follows, we show  \eqref{eq2.1} holds for the $m$ case.
By the inductive hypothesis, we have
\begin{align*}
&\prod_{i=j}^{m}\frac{1}{2n+2r+i}\nonumber\\
=&\frac{1}{2n+2r+m}\prod_{i=j}^{m-1}\frac{1}{2n+2r+i}\nonumber\\
=&\frac{1}{2n+2r+m}\cdot\frac{1}{(m-j-1)!}
\sum_{i=j}^{m-1}(-1)^{i-j}\binom{m-j-1}{i-j}\frac{1}{2n+2r+i}\nonumber\\
=&\frac{1}{(m-j-1)!}\sum_{i=j}^{m-1}(-1)^{i-j}\binom{m-j-1}{i-j}
\frac{1}{m-i}\Big(\frac{1}{2n+2r+i}-\frac{1}{2n+2r+m}\Big)\nonumber\\
=&\frac{1}{(m-j-1)!}\sum_{i=j}^{m-1}(-1)^{i-j}\frac{(m-j-1)!}{(i-j)!(m-i-1)!}
\frac{1}{m-i}\Big(\frac{1}{2n+2r+i}-\frac{1}{2n+2r+m}\Big)\nonumber\\
=&\frac{1}{(m-j)!}\sum_{i=j}^{m-1}(-1)^{i-j}\frac{(m-j)!}{(i-j)!(m-i)!}
\Big(\frac{1}{2n+2r+i}-\frac{1}{2n+2r+m}\Big)\nonumber\\
=&\frac{1}{(m-j)!}\sum_{i=j}^{m-1}(-1)^{i-j}\binom{m-j}{i-j}
\Big(\frac{1}{2n+2r+i}-\frac{1}{2n+2r+m}\Big)\nonumber\\
=&\frac{1}{(m-j)!}\Big(\sum_{i=j}^{m-1}(-1)^{i-j}\binom{m-j}{i-j}
\frac{1}{2n+2r+i}-\frac{1}{2n+2r+m}\sum_{i=j}^{m-1}(-1)^{i-j}\binom{m-j}{i-j}\Big).
\end{align*}

Let us calculate the sum $\sum_{i=j}^{m-1}(-1)^{i-j}\binom{m-j}{i-j}$. One notices that
\begin{align}\label{eq2.2}
(-1)^{i-j}\binom{m-j}{i-j}=G(m,i+1)-G(m,i),
\end{align}
where
\begin{align*}
G(m,i):=(-1)^{i-j-1}\binom{m-j-1}{i-j-1}.
\end{align*}
Then summing both sides of equation \eqref{eq2.2} over $i$ from $j$ to $m-1$,
we obtain that
\begin{align*}
&\sum_{i=j}^{m-1}(-1)^{i-j}\binom{m-j}{i-j}\\
=&\sum_{i=j}^{m-1}(G(m,i+1)-G(m,i))\\
=& G(m,m)-G(m,j)\\
=& (-1)^{m-j-1}.
\end{align*}
It then follows that
\begin{align*}
\prod_{i=j}^{m}\frac{1}{2n+2r+i}
=&\frac{1}{(m-j)!}\Big(\sum_{i=j}^{m-1}(-1)^{i-j}\binom{m-j}{i-j}
\frac{1}{(2n+2r+i)}-\frac{(-1)^{m-j-1}}{2n+2r+m}\Big)\\
=&\frac{1}{(m-j)!}\sum_{i=j}^{m}(-1)^{i-j}\binom{m-j}{i-j}\frac{1}{(2n+2r+i)}
\end{align*}
as claimed for the $m$ case. So Lemma 2.3 is proved.
\end{proof}

\begin{lemma}\label{lemma2.2}
For any positive integers $s$, $r$ and $k$ with $k\le \frac{1}{2}(r+1)+s$, we have
\begin{align}\label{eq2.3}
\sum_{i=0}^{2s+1} (-1)^i \binom{2s+1}{i} \binom{r+i}{2k}
=-\binom{r}{2k-2s-1}
\end{align}
and
\begin{align}\label{eq2.4}
\sum_{i=2k-r}^{2s+1}(-1)^i \binom{2s+1}{i} \binom{r+i}{2k}
=-\binom{r}{2k-2s-1}
\end{align}
\end{lemma}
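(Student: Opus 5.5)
The plan is to read the left-hand side of \eqref{eq2.3} as an iterated forward difference. For a function $f$ on the nonnegative integers, write $\Delta f(i):=f(i+1)-f(i)$. The standard expansion of $(E-1)^n$, where $E$ is the shift operator, gives
\begin{align*}
\sum_{i=0}^{n}(-1)^i\binom{n}{i}f(i)=(-1)^n(\Delta^n f)(0).
\end{align*}
I will apply this with $n=2s+1$ and $f(i)=\binom{r+i}{2k}$.

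Pascal's rule gives $\binom{r+i+1}{m}-\binom{r+i}{m}=\binom{r+i}{m-1}$, i.e. $\Delta\binom{r+i}{m}=\binom{r+i}{m-1}$. Iterating this $2s+1$ times yields
\begin{align*}
\Delta^{2s+1}\binom{r+i}{2k}=\binom{r+i}{2k-2s-1},
\end{align*}
with the convention that $\binom{a}{b}=0$ for $b<0$. Evaluating at $i=0$ and using $(-1)^{2s+1}=-1$ gives \eqref{eq2.3} directly.

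As a cross-check, and as an alternative route if one prefers to avoid difference operators, I will expand $\binom{r+i}{2k}=\sum_{j}\binom{i}{j}\binom{r}{2k-j}$ by Vandermonde. I then swap the sums and use the orthogonality relation
\begin{align*}
\sum_{i=0}^{n}(-1)^i\binom{n}{i}\binom{i}{j}=(-1)^n\delta_{j,n},
\end{align*}
which follows from $\binom{n}{i}\binom{i}{j}=\binom{n}{j}\binom{n-j}{i-j}$ and the binomial theorem. Only the term $j=2s+1$ survives, again giving $-\binom{r}{2k-2s-1}$. The hypothesis $k\le\frac12(r+1)+s$ means $0\le 2k-2s-1\le r$ whenever the right side is nonzero, so it is not really needed for \eqref{eq2.3}; it only guarantees the right-hand side is a genuine binomial coefficient.

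For \eqref{eq2.4}, I will show that the terms dropped from \eqref{eq2.3} all vanish. For $0\le i<2k-r$ we have $0\le r+i<2k$, so $\binom{r+i}{2k}=0$. If $2k-r<0$, the extra indices $i<0$ contribute $\binom{2s+1}{i}=0$. Moreover, the hypothesis $k\le\frac12(r+1)+s$ is exactly $2k-r\le 2s+1$, so the summation range in \eqref{eq2.4} is a nonempty tail of that in \eqref{eq2.3}. Hence the two sums coincide, and \eqref{eq2.4} follows from \eqref{eq2.3}.

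There is no real obstacle here. The only points needing care are the conventions for binomial coefficients with negative lower index and the bookkeeping of the lower limit $2k-r$ in \eqref{eq2.4}, both handled as above.
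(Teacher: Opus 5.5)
Your argument is correct, and it reaches the identity by a different, though closely related, mechanism than the paper.

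The paper introduces the polynomial $F(x)=\sum_{i=0}^{2s+1}(-1)^i\binom{2s+1}{i}(1+x)^{r+i}$. It sums this in closed form as $(1+x)^r\bigl(1-(1+x)\bigr)^{2s+1}=-x^{2s+1}(1+x)^r$ and compares coefficients of $x^{2k}$. For \eqref{eq2.4} it observes that in the coefficient of $x^{j}$ the terms with $r+i<j$ vanish, so the inner sum may start at $i=j-r$; this is the same vanishing you use.

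You replace the polynomial by $\sum_i(-1)^i\binom{n}{i}f(i)=(-1)^n(\Delta^nf)(0)$ together with iterated Pascal's rule. This is the coefficientwise form of the paper's computation. Multiplication by $1+x$ acts as the shift on coefficient sequences, so the factor $\bigl(1-(1+x)\bigr)^{2s+1}=-x^{2s+1}$ is exactly your $-\Delta^{2s+1}$ lowering the bottom index by $2s+1$.

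The paper's version treats all $k$ at once with a single polynomial identity. Yours works with individual binomial coefficients and makes it transparent that the hypothesis $k\le\frac12(r+1)+s$ is not needed for \eqref{eq2.3}. It is not needed for \eqref{eq2.4} either: if $2k-r>2s+1$ the sum is empty, and $2k-2s-1>r$ makes the right-hand side vanish.

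Your remark that a negative lower limit $2k-r$ is harmless, because $\binom{2s+1}{i}=0$ for $i<0$, covers a point the paper uses tacitly in passing to \eqref{eq2.6}. Both arguments also work verbatim for $r,s\ge 0$. That matters because the lemma is later applied with $s=0$ allowed, although it is stated only for positive $s$.
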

\begin{proof}
First of all, we introduce an auxiliary polynomial $F(x)$ as follows
$$
F(x):=\sum_{i=0}^{2s+1}(-1)^i \binom{2s+1}{i}(1+x)^{r+i}.
$$
Then
\begin{align}\label{eq2.5}
F(x)=&\sum_{i=0}^{2s+1}(-1)^i \binom{2s+1}{i}\sum_{j=0}^{r+i}\binom{r+i}{j}x^j\\
=&\sum_{j=0}^{r+2s+1}\Big(\sum_{i=j-r}^{2s+1}(-1)^i \binom{2s+1}{i}\binom{r+i}{j}\Big)x^j.\label{eq2.6}
\end{align}
and
\begin{align}\label{eq2.7}
F(x)=&(1+x)^{r}\sum_{i=0}^{2s+1}\binom{2s+1}{i}(-1)^i (1+x)^{i}\nonumber\\
=&(1+x)^{r}(1-(1+x))^{2s+1}\nonumber\\
=&-(1+x)^{r}x^{2s+1}\nonumber\\
=&-x^{2s+1}\sum_{j=0}^{r}\binom{r}{j}x^j\nonumber\\
=&-\sum_{j=0}^{r}\binom{r}{j}x^{2s+j+1}.
\end{align}
Letting $j=2k$ in \eqref{eq2.5} and picking $j=2k-2s-1$ in \eqref{eq2.7},
comparing the coefficient of $x^{2k}$ give us the desired identity \eqref{eq2.3}.
Likewise, letting $j=2k$ in \eqref{eq2.6} and picking $j=2k-2s-1$ in \eqref{eq2.7},
comparing the coefficient of $x^{2k}$ give us the desired identity \eqref{eq2.4}.

This concludes the proof of Lemma 2.4.
\end{proof}

The following result about rational zeta series involving the values $\zeta(2n)$
is due to Lupu and also needed in our proofs.

\begin{lemma}\label{lemma2.3} (Lemma 2.6 of \cite{[L-MZ]})
Let $p$ be a positive integer. Then
\begin{align*}
& \sum_{n=0}^{\infty}\frac{\zeta(2n)}{(2n+p)2^{2n}}\\
=&-\frac{1}{2}\log2-\sum_{k=1}^{\lfloor p/2\rfloor}\frac{p!(-1)^k(4^k-1)\zeta(2k+1)}
{2\cdot(p-2k)!(2\pi)^{2k}}-\delta_{\lfloor p/2\rfloor,p/2}\frac{p!(-1)^{p/2}
\zeta(p+1)}{2\pi^p},
\end{align*}
where $\delta_{i,j}$ is the Kronecker symbol defined by
$$
\delta_{i,j}:=
\left\{\begin{array}{ll}
\displaystyle 0,
&\textrm{ if \ $i\neq j$, }\\
\displaystyle 1,
&\textrm{ if \ $i=j$.}
\end{array}\right.
$$
\end{lemma}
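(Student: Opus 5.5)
The plan is to turn the series into an integral via the cotangent generating function for $\zeta(2n)$, and then evaluate that integral through the Fourier expansion of $\log\sin u$.

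\textbf{Step 1: generating function and integral representation.} I will use the classical expansion
\begin{align*}
\sum_{n=0}^{\infty}\zeta(2n)x^{2n}=-\frac{\pi x}{2}\cot(\pi x),\qquad |x|<1,
\end{align*}
with the convention $\zeta(0)=-\frac12$. Writing
\begin{align*}
\frac{1}{2n+p}=\int_0^1 t^{2n+p-1}\,dt,
\end{align*}
I interchange sum and integral. This is legitimate because at $x=t/2\le \frac12$ we have $\zeta(2n)2^{-2n}\ll 4^{-n}$, so the series converges uniformly on $[0,1]$. This gives
\begin{align*}
\sum_{n=0}^{\infty}\frac{\zeta(2n)}{(2n+p)2^{2n}}
=-\frac{\pi}{4}\int_0^1 t^{p}\cot\Big(\frac{\pi t}{2}\Big)dt
=-\frac{1}{2}\Big(\frac{2}{\pi}\Big)^{p}\int_0^{\pi/2}u^p\cot u\,du ,
\end{align*}
after the substitution $u=\pi t/2$.

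\textbf{Step 2: integrate by parts and expand $\log\sin u$.} Integrating by parts, the boundary term $u^p\log\sin u$ vanishes at both ends since $p\ge1$. Hence
\begin{align*}
\int_0^{\pi/2}u^p\cot u\,du=-p\int_0^{\pi/2}u^{p-1}\log\sin u\,du .
\end{align*}
I then insert
\begin{align*}
\log\sin u=-\log 2-\sum_{m\ge1}\frac{\cos(2mu)}{m},
\end{align*}
which converges in $L^2$, so termwise integration against $u^{p-1}$ is allowed.

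The constant term contributes $p\log 2\cdot(\pi/2)^p/p$. After multiplying by the prefactor $-\frac12(2/\pi)^p$ it produces exactly $-\frac12\log2$, which is a useful check on the normalization.

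\textbf{Step 3: the trigonometric moments.} It remains to compute
\begin{align*}
I_m:=\int_0^{\pi/2}u^{p-1}\cos(2mu)\,du
\end{align*}
by repeated integration by parts. This is the step I expect to be the main bookkeeping obstacle. I would write the closed form
\begin{align*}
\int u^{q}\cos(au)\,du=\sum_{j\ge0}\frac{q!}{(q-j)!}\,u^{q-j}\,\frac{\frac{d^{j}}{du^{j}}\sin(au)\ \text{shifted}}{a^{j+1}} ,
\end{align*}
that is, the alternating $\sin/\cos$ pattern, and then evaluate it at the endpoints with $a=2m$.

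At $u=\pi/2$ all sine terms vanish and each cosine term gives $(-1)^m$. At $u=0$ only the term with no power of $u$ survives, and it does so only when it is a cosine term.

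\textbf{Step 4: summing over $m$.} Summing $\frac1m I_m$ over $m$ turns the upper-endpoint terms into alternating sums
\begin{align*}
\sum_{m}\frac{(-1)^m}{m^{2k+1}}=-(1-4^{-k})\zeta(2k+1)
\end{align*}
for $1\le k\le\lfloor p/2\rfloor$. These come with the coefficient $\frac{(p-1)!}{(p-2k)!}(\pi/2)^{p-2k}2^{-2k}$ times signs $(-1)^k$. After multiplying by $-p\cdot(-\frac12)(2/\pi)^p$, this yields precisely the term
\begin{align*}
-\frac{p!(-1)^k(4^k-1)\zeta(2k+1)}{2(p-2k)!(2\pi)^{2k}} .
\end{align*}

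The lower endpoint $u=0$ contributes only when $p-1$ is odd, i.e.\ when $p$ is even. In that case it gives a non-alternating sum $\sum_m m^{-(p+1)}=\zeta(p+1)$ with coefficient proportional to $(p-1)!\,2^{-p}$. After normalization this is the Kronecker-delta term $-\delta_{\lfloor p/2\rfloor,p/2}\,p!(-1)^{p/2}\zeta(p+1)/(2\pi^p)$.

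The final task is to track the signs $(-1)^k$ carefully and to check the small cases $p=1$ and $p=2$ by hand.
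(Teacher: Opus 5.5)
Your proof is correct. The paper itself does not prove this statement: it imports it as Lemma 2.6 of Lupu and uses it as a black box in Sections 3 and 4. So your argument is a self-contained replacement for that citation rather than a variant of anything in the paper.

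I checked your computations:
\begin{itemize}
\item the normalization $-\frac{\pi}{4}\int_0^1 t^p\cot(\pi t/2)\,dt=-\frac12(2/\pi)^p\int_0^{\pi/2}u^p\cot u\,du$ is right;
\item the boundary term in the integration by parts vanishes, as you say;
\item the $L^2$ justification for integrating the $\log\sin u$ series termwise is valid;
\item the constant term gives exactly $-\frac12\log 2$.
\end{itemize}

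The one loose spot is the ``shifted'' antiderivative in Step 3. It is cleaner to state the moment explicitly, for instance by taking the real part of $\int_0^{\pi/2}u^{p-1}e^{2imu}\,du$:
\begin{align*}
I_m=(-1)^m\sum_{k=1}^{\lfloor p/2\rfloor}(-1)^{k+1}\frac{(p-1)!}{(p-2k)!}\,\frac{(\pi/2)^{p-2k}}{(2m)^{2k}}
+\delta_{\lfloor p/2\rfloor,p/2}\,(-1)^{p/2}\,\frac{(p-1)!}{(2m)^{p}} .
\end{align*}
With this, the series equals $-\frac12\log 2-\frac{p}{2}(2/\pi)^p\sum_{m\ge1}I_m/m$. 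Inserting $\sum_m(-1)^m m^{-2k-1}=-(1-4^{-k})\zeta(2k+1)$ and $\sum_m m^{-p-1}=\zeta(p+1)$ gives exactly the two sums in the statement, with the signs you predicted. I checked $p=1,2,3$ by hand; for example, $p=2$ gives $-\frac12\log 2+\frac{7\zeta(3)}{4\pi^2}$ on both sides. All the $m$-sums converge absolutely, since the smallest power occurring is $m^{-3}$, so regrouping by endpoint is harmless.

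Compared with the bare citation, your route shows that the only analytic inputs behind the paper's ``elementary'' proofs are the cotangent expansion of $\zeta(2n)$ and the Fourier series of $\log\sin u$. It also explains the shape of the lemma:
\begin{itemize}
\item the factors $(4^k-1)$ come from alternating sums at the endpoint $u=\pi/2$;
\item the Kronecker-delta term is the non-alternating contribution of the endpoint $u=0$, which survives only when $p-1$ is odd.
\end{itemize}
That delta term is the one the paper later folds back into the $k$-sum via Lemma~\ref{lemma2.4}. The citation keeps the paper short but hides this structure and dependence.
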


\begin{lemma}\label{lemma2.4}
Let $I$ be a finite set of positive integers and let $f$
be an arithmetic function. Then for any $a\in I$, we have
$$
\sum_{k\in I} f(k)\delta_{a,k}=f(a).
$$
In particular, for any positive integer $a$, we have
$$
\sum_{k=1}^a f(k)\delta_{a,k}=f(a),
$$
and for any positive integer $b\ge a$, we have
$$
\sum_{k=a}^b f(k)\delta_{a,k}=f(a).
$$
\end{lemma}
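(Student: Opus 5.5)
The statement to prove is Lemma \ref{lemma2.4}, which is essentially the defining property of the Kronecker symbol. The plan is to split the finite sum according to whether the summation index equals $a$ or not.

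Since $I$ is finite, the sum $\sum_{k\in I} f(k)\delta_{a,k}$ has finitely many terms, so it may be rearranged freely. Because $a\in I$, the index set decomposes as the disjoint union $I=\{a\}\cup (I\setminus\{a\})$. This gives
\begin{align*}
\sum_{k\in I} f(k)\delta_{a,k}=f(a)\delta_{a,a}+\sum_{k\in I\setminus\{a\}} f(k)\delta_{a,k}.
\end{align*}
By the definition of $\delta_{i,j}$ in Lemma \ref{lemma2.3}, we have $\delta_{a,a}=1$. For every $k\in I\setminus\{a\}$ we have $k\neq a$, so $\delta_{a,k}=0$. Hence every term of the second sum vanishes, and the whole expression equals $f(a)$.

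The two particular cases follow by specializing $I$.
\begin{itemize}
\item Take $I=\{1,\dots,a\}$, which contains $a$ because $a\ge 1$. This gives $\sum_{k=1}^a f(k)\delta_{a,k}=f(a)$.
\item Take $I=\{a,\dots,b\}$, which contains $a$ because $b\ge a$. This gives $\sum_{k=a}^b f(k)\delta_{a,k}=f(a)$.
\end{itemize}

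There is no genuine obstacle here. The only point requiring care is the hypothesis $a\in I$ in each specialization, since if $a\notin I$ the sum would be $0$ rather than $f(a)$. This is why the ranges $1\le a$ and $a\le b$ are imposed in the two particular cases.
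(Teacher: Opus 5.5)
Your proof is correct and follows the same route as the paper: split off the $k=a$ term, then use $\delta_{a,a}=1$ and $\delta_{a,k}=0$ for $k\in I\setminus\{a\}$. You also spell out the two particular cases by taking $I=\{1,\dots,a\}$ and $I=\{a,\dots,b\}$ and checking that $a\in I$; the paper leaves this specialization implicit.
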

\begin{proof}
By the definition of  the Kronecker symbol, one knows that $\delta_{a,a}=1$ and
$\delta_{a,k}=0$ if $k\in I\setminus\{a\}$. So
$$
\sum_{k\in I} f(k)\delta_{a,k}=\sum_{k\in I\setminus\{a\}} f(k)\delta_{a,k}+\delta_{a,a}f(a)=f(a)
$$
as expected.

This concludes the proof of Lemma 2.6.
\end{proof}

\section{Proof of Theorems \ref{theorem1.1}}
In this section, we use the lemmas presented in the  previous
section to show Theorem \ref{theorem1.1}.\\

\noindent {\it Proof of Theorem \ref{theorem1.1}.}
First of all, by Lemma \ref{theorem1.3} we know that for nonnegative
integers $r$ and $s$,
\begin{align*}
H(r,s)=\frac{-4\pi^{2r+2s+2}}{(2r+2)!}\sum_{n=0}^{\infty}
\frac{\zeta(2n)}{(2n+2r+2)(2n+2r+3)\cdots(2n+2r+2s+3)2^{2n}}.
\end{align*}
Let
\begin{align}\label{eq3.2}
A_{r,s}:=\sum_{n=0}^{\infty}\frac{\zeta(2n)}
{(2n+2r+2)(2n+2r+3)\cdots(2n+2r+2s+3)2^{2n}}.
\end{align}
Then
\begin{align}\label{eq3.1}
H(r,s)=\frac{-4\pi^{2r+2s+2}}{(2r+2)!}A_{r,s}.
\end{align}
Applying Lemma \ref{lemma2.1} yields that
\begin{align}\label{eq3.3}
A_{r,s}=&\sum_{n=0}^\infty \frac{\zeta(2n)}{2^{2n}}\prod_{i=2}^{2s+3}\frac{1}{2n+2r+i}\nonumber\\
=&\frac{1}{(2s+1)!}\sum_{i=2}^{2s+3}(-1)^i\binom{2s+1}{i-2}
\sum_{n=0}^{\infty}\frac{\zeta(2n)}{(2n+2r+i)2^{2n}}\nonumber\\
:=&\frac{1}{(2s+1)!} B_{r,s}.
\end{align}

Secondly, let us calculate the sum $B_{r,s}$. By the definition of Kronecker's
symbol and using Lemma \ref{lemma2.3} with $p$ being replaced by $2r+i$, we derive that
\begin{align*}
B_{r,s}=&-\frac{1}{2}\sum_{i=2}^{2s+3}(-1)^i\binom{2s+1}{i-2}\Big(\log 2+
\sum_{k=1}^{\lfloor r+i/2\rfloor}\frac{(2r+i)!(-1)^k(4^k-1)\zeta(2k+1)}
{(2r+i-2k)!(2\pi)^{2k}}\nonumber\\
&+\delta_{\lfloor r+i/2\rfloor,r+i/2}\frac{(2r+i)!(-1)^{r+i/2}
\zeta(2r+i+1)}{\pi^{2r+i}}\Big)\nonumber\\
=&-\frac{\log 2}{2}\sum_{i=0}^{2s+1}(-1)^i\binom{2s+1}{i}
-\frac{1}{2}\sum_{i=2}^{2s+3}(-1)^i\binom{2s+1}{i-2}(2r+i)!\nonumber\\
&\times\Big(\sum_{k=1}^{\lfloor r+i/2\rfloor}\frac{(-1)^k(4^k-1)\zeta(2k+1)}
{(2r+i-2k)!(2\pi)^{2k}}+\delta_{\lfloor r+i/2\rfloor,r+i/2}\frac{(-1)^{r+i/2}
\zeta(2r+i+1)}{\pi^{2r+i}}\Big)\nonumber\\
=&-\frac{1}{2}\mathop{\sum_{i=2}^{2s+3}}_{2|i}(-1)^i\binom{2s+1}{i-2}(2r+i)!
\Big(\sum_{k=1}^{r+i/2}\frac{(-1)^k(4^k-1)\zeta(2k+1)}
{(2r+i-2k)!(2\pi)^{2k}}\nonumber\\
&+\delta_{r+i/2,r+i/2}\frac{(-1)^{r+i/2}\zeta(2r+i+1)}{\pi^{2r+i}}\Big)\nonumber\\
&-\frac{1}{2}\mathop{\sum_{i=2}^{2s+3}}_{2\nmid i}(-1)^i\binom{2s+1}{i-2}(2r+i)!
\sum_{k=1}^{r+(i-1)/2}\frac{(-1)^k(4^k-1)\zeta(2k+1)}{(2r+i-2k)!(2\pi)^{2k}}\nonumber\\
=&-\frac{1}{2}\sum_{i'=1}^{s+1}\binom{2s+1}{2i'-2}(2r+2i')!
\sum_{k=1}^{r+i'}\frac{(-1)^k(4^k-1)\zeta(2k+1)}{(2r+2i'-2k)!(2\pi)^{2k}}\nonumber\\
&+\frac{1}{2}\sum_{i'=1}^{s+1}\binom{2s+1}{2i'-1}(2r+2i'+1)!
\sum_{k=1}^{r+i'}\frac{(-1)^k(4^k-1)\zeta(2k+1)}{(2r+2i'+1-2k)!(2\pi)^{2k}}\nonumber\\
&-\frac{1}{2}\sum_{i'=1}^{s+1}\binom{2s+1}{2i'-2}
\delta_{r+i',r+i'}\frac{(2r+2i')!(-1)^{r+i'}
\zeta(2r+2i'+1)}{\pi^{2r+2i'}}.
\end{align*}
Letting in Lemma \ref{lemma2.4}
$$f(k):=\frac{(2k)!(-1)^k\zeta(2k+1)}{\pi^{2k}}$$
gives us that
\begin{align*}
\delta_{r+i',r+i'}\frac{(2r+2i')!(-1)^{r+i'}\zeta(2r+2i'+1)}{\pi^{2r+2i'}}
=\sum_{k=1}^{r+i'}\delta_{r+i',k}\frac{(2k)!(-1)^k\zeta(2k+1)}{\pi^{2k}}.
\end{align*}
It then follows that
\begin{align*}
& B_{r,s}\nonumber\\
=&-\frac{1}{2}\sum_{i'=1}^{s+1}\binom{2s+1}{2i'-2}(2r+2i')!
\sum_{k=1}^{r+i'}\frac{(-1)^k(4^k-1)\zeta(2k+1)}{(2r+2i'-2k)!(2\pi)^{2k}}\nonumber\\
&+\frac{1}{2}\sum_{i'=1}^{s+1}\binom{2s+1}{2i'-1}(2r+2i'+1)!
\sum_{k=1}^{r+i'}\frac{(-1)^k(4^k-1)\zeta(2k+1)}{(2r+2i'+1-2k)!(2\pi)^{2k}}\nonumber\\
&-\frac{1}{2}\sum_{i'=1}^{s+1}\binom{2s+1}{2i'-2}\sum_{k=1}^{r+i'}\delta_{r+i',k}
\frac{(2k)!(-1)^k\zeta(2k+1)}{\pi^{2k}}\nonumber\\
=&-\frac{1}{2}\sum_{k=1}^{r+s+1}\frac{(-1)^k(4^k-1)\zeta(2k+1)}{(2\pi)^{2k}}
\sum_{i'=\max(1,k-r)}^{s+1}\binom{2s+1}{2i'-2}\frac{(2r+2i')!}{(2r+2i'-2k)!}\nonumber\\
&+\frac{1}{2}\sum_{k=1}^{r+s+1}\frac{(-1)^k(4^k-1)\zeta(2k+1)}{(2\pi)^{2k}}
\sum_{i'=\max(1,k-r)}^{s+1}\binom{2s+1}{2i'-1}\frac{(2r+2i'+1)!}{(2r+2i'+1-2k)!}\nonumber\\
&-\frac{1}{2}\sum_{k=1}^{r+s+1}\frac{(2k)!(-1)^k\zeta(2k+1)}{\pi^{2k}}
\sum_{i'=\max(1,k-r)}^{s+1}\binom{2s+1}{2i'-2}\delta_{r+i',k}\nonumber\\
=&-\frac{1}{2}\sum_{k=1}^{r+s+1}\frac{(-1)^k(4^k-1)\zeta(2k+1)}{(2\pi)^{2k}}
\sum_{i'=\max(1,k-r)}^{s+1}\binom{2s+1}{2i'-2}\frac{(2r+2i')!}{(2r+2i'-2k)!}\nonumber\\
&+\frac{1}{2}\sum_{k=1}^{r+s+1}\frac{(-1)^k(4^k-1)\zeta(2k+1)}{(2\pi)^{2k}}
\sum_{i'=\max(1,k-r)}^{s+1}\binom{2s+1}{2i'-1}\frac{(2r+2i'+1)!}{(2r+2i'+1-2k)!}\nonumber\\
&-\frac{1}{2}\sum_{k=r+1}^{r+s+1}\frac{(2k)!(-1)^k\zeta(2k+1)}{\pi^{2k}}
\sum_{i'=k-r}^{s+1}\binom{2s+1}{2i'-2}\delta_{r+i',k}
\text{\ (since $\delta_{r+i',k}=0$ for $1\le k \le r$)}\nonumber\\
=&-\frac{1}{2}\sum_{k=1}^{r+s+1}\frac{(-1)^k(4^k-1)\zeta(2k+1)}{(2\pi)^{2k}}\nonumber\\
&\times\sum_{i'=\max(1,k-r)}^{s+1}\Big(\binom{2s+1}{2i'-2}\frac{(2r+2i')!}{(2r+2i'-2k)!}
-\binom{2s+1}{2i'-1}\frac{(2r+2i'+1)!}{(2r+2i'+1-2k)!}\Big)\nonumber\\
&-\frac{1}{2}\sum_{k=r+1}^{r+s+1}\frac{(2k)!(-1)^k\zeta(2k+1)}{\pi^{2k}}
\sum_{i'=k-r}^{s+1}\binom{2s+1}{2i'-2}\delta_{r+i',k}\nonumber\\
=&-\frac{1}{2}\sum_{k=1}^{r+s+1}\frac{(-1)^k(4^k-1)\zeta(2k+1)(2k)!}{(2\pi)^{2k}}\nonumber\\
&\times\sum_{i'=\max(1,k-r)}^{s+1}\Big(\binom{2s+1}{2i'-2}\binom{2r+2i'}{2k}
-\binom{2s+1}{2i'-1}\binom{2r+2i'+1}{2k}\Big)\nonumber\\
&-\frac{1}{2}\sum_{k=r+1}^{r+s+1}\frac{(2k)!(-1)^k\zeta(2k+1)}{\pi^{2k}}
\sum_{i'=k-r}^{s+1}\binom{2s+1}{2i'-2}\delta_{r+i',k}.
\end{align*}

Notice that
\begin{align*}
&\sum_{i'=\max(1,k-r)}^{s+1}
\Big(\binom{2s+1}{2i'-2}\binom{2r+2i'}{2k}
-\binom{2s+1}{2i'-1}\binom{2r+2i'+1}{2k}\Big)\nonumber\\
=&\mathop{\sum_{i'=\max(2,2k-2r)-2}^{2s}}_{2|i}
\binom{2s+1}{i'}\binom{2r+i'+2}{2k}
-\mathop{\sum_{i'=\max(2,2k-2r)-1}^{2s+1}}_{2\nmid i}
\binom{2s+1}{i'}\binom{2r+i'+2}{2k}\nonumber\\
=&\sum_{i'=\max(2,2k-2r)-2}^{2s+1}(-1)^{i'}
\binom{2s+1}{i'}\binom{2r+i'+2}{2k}.
\end{align*}
Moreover, if $1\leq k\leq r$, then $k-r\leq 0$, and so $\max(2,2k-2r)=2$.
This concludes that
\begin{align}\label{eq3.4}
& B_{r,s}\nonumber\\
=&-\frac{1}{2}\sum_{k=1}^{r+s+1}\frac{(-1)^k(4^k-1)\zeta(2k+1)(2k)!}
{(2\pi)^{2k}}\sum_{i'=\max(2,2k-2r)-2}^{2s+1}(-1)^{i'}
\binom{2s+1}{i'}\binom{2r+i'+2}{2k}\nonumber\\
&-\frac{1}{2}\sum_{k=r+1}^{r+s+1}\frac{(2k)!(-1)^k\zeta(2k+1)}{\pi^{2k}}
\sum_{i'=k-r}^{s+1}\binom{2s+1}{2i'-2}\delta_{r+i',k}\nonumber\\
=&-\frac{1}{2}\sum_{k=1}^{r}\frac{(-1)^k(4^k-1)\zeta(2k+1)(2k)!}
{(2\pi)^{2k}}\sum_{i'=0}^{2s+1}(-1)^{i'}
\binom{2s+1}{i'}\binom{2r+i'+2}{2k}\nonumber\\
&-\frac{1}{2}\sum_{k=r+1}^{r+s+1}\frac{(-1)^k(4^k-1)\zeta(2k+1)(2k)!}
{(2\pi)^{2k}}\sum_{i'=2k-2r-2}^{2s+1}(-1)^{i'}
\binom{2s+1}{i'}\binom{2r+i'+2}{2k}\nonumber\\
&-\frac{1}{2}\sum_{k=r+1}^{r+s+1}\frac{(2k)!(-1)^k\zeta(2k+1)}{\pi^{2k}}
\binom{2s+1}{2k-2r-2}\delta_{k,k}.
\end{align}

However, by Lemma \ref{lemma2.2}, one has
\begin{align}\label{eq3.5}
\sum_{i'=0}^{2s+1}(-1)^{i'}\binom{2s+1}{i'}\binom{2r+i'+2}{2r+i'+2-2k}
=-\binom{2r+2}{2k-2s-1}
\end{align}
and
\begin{align}\label{eq3.6}
\sum_{i'=2k-2r-2}^{2s+1}(-1)^{i'}\binom{2s+1}{i'}\binom{2r+i'+2}{2r+i'+2-2k}=-\binom{2r+2}{2k-2s-1}.
\end{align}
Then putting \eqref{eq3.5} and \eqref{eq3.6} into \eqref{eq3.4}, one arrives at
\begin{align}\label{eq3.7}
& B_{r,s}\nonumber\\
=&\frac{1}{2}\sum_{k=1}^{r}\frac{(-1)^k(4^k-1)\zeta(2k+1)(2k)!}
{(2\pi)^{2k}}\binom{2r+2}{2k-2s-1}\nonumber\\
&+\frac{1}{2}\sum_{k=r+1}^{r+s+1}\frac{(-1)^k(4^k-1)\zeta(2k+1)(2k)!}
{(2\pi)^{2k}}\binom{2r+2}{2k-2s-1}\nonumber\\
&-\frac{1}{2}\sum_{k=r+1}^{r+s+1}\frac{(2k)!(-1)^k\zeta(2k+1)}{\pi^{2k}}
\binom{2s+1}{2k-2r-2}\nonumber\\
=&\frac{1}{2}\sum_{k=1}^{r+s+1}\frac{(-1)^k(4^k-1)\zeta(2k+1)(2k)!}
{(2\pi)^{2k}}\binom{2r+2}{2k-2s-1}\nonumber\\
&-\frac{1}{2}\sum_{k=r+1}^{r+s+1}
\frac{(2k)!(-1)^k\zeta(2k+1)}{\pi^{2k}}\binom{2s+1}{2k-2r-2}\nonumber\\
=&\frac{1}{2}\sum_{k=1}^{r+s+1}\frac{(-1)^k(4^k-1)\zeta(2k+1)(2k)!}
{(2\pi)^{2k}}\binom{2r+2}{2k-2s-1}\nonumber\\
&-\frac{1}{2}\sum_{k=1}^{r+s+1}
\frac{(2k)!(-1)^k\zeta(2k+1)}{\pi^{2k}}\binom{2s+1}{2k-2r-2}
\text{\ (since $\binom{2s+1}{2k-2r-2}:=0$ if $1\le k \le r$).}
\end{align}

Now substituting \eqref{eq3.7} into \eqref{eq3.3}, we obtain that
\begin{align}\label{eq3.8}
A_{r,s}=&\frac{1}{(2s+1)!} B_{r,s}\nonumber\\
=&\frac{1}{2\cdot(2s+1)!}\sum_{k=1}^{r+s+1}\frac{(-1)^k(4^k-1)\zeta(2k+1)(2k)!}
{(2\pi)^{2k}}\binom{2r+2}{2k-2s-1}\nonumber\\
&-\frac{1}{2\cdot(2s+1)!}\sum_{k=1}^{r+s+1}\frac{(2k)!(-1)^k\zeta(2k+1)}{\pi^{2k}}
\binom{2s+1}{2k-2r-2}.
\end{align}
Then putting \eqref{eq3.8} into \eqref{eq3.1}, one gets that
\begin{align}\label{eq3.9}
& H(r,s)\nonumber\\
=&\frac{-4\pi^{2r+2s+2}}{(2r+2)!}A_{r,s}\nonumber\\
=&-\frac{4\pi^{2r+2s+2}}{(2r+2)!}\frac{1}{2\cdot(2s+1)!}
\sum_{k=1}^{r+s+1}\frac{(-1)^k(4^k-1)\zeta(2k+1)(2k)!}
{(2\pi)^{2k}}\binom{2r+2}{2k-2s-1}\nonumber\\
&+\frac{4\pi^{2r+2s+2}}{(2r+2)!}\frac{1}{2\cdot(2s+1)!}
\sum_{k=1}^{r+s+1}\frac{(2k)!(-1)^k\zeta(2k+1)}{\pi^{2k}}
\binom{2s+1}{2k-2r-2}\nonumber\\
=&-\frac{2\pi^{2r+2s+2}}{(2r+2)!}\frac{1}{(2s+1)!}
\sum_{k=1}^{r+s+1}\frac{(-1)^{k}(4^k-1)\zeta(2k+1)(2k)!}{(2\pi)^{2k}}
\frac{(2r+2)!}{(2k-2s-1)!(2r+2s+3-2k)!}\nonumber\\
&+\frac{2\pi^{2r+2s+2}}{(2r+2)!}\cdot\frac{1}{(2s+1)!}
\sum_{k=1}^{r+s+1}\frac{(2k)!(-1)^k}{\pi^{2k}}
\frac{(2s+1)!}{(2k-2r-2)!(2r+2s+3-2k)!}\zeta(2k+1)\nonumber\\
=&-2\sum_{k=1}^{r+s+1}\frac{(2k)!}{(2s+1)!(2k-2s-1)!}\cdot
\frac{\pi^{2r+2s+2-2k}}{(2r+2s+3-2k)!}
(-1)^k\Big(1-\frac{1}{4^k}\Big)\zeta(2k+1)\nonumber\\
&+2\sum_{k=1}^{r+s+1}\frac{\pi^{2r+2s+2-2k}}{(2r+2s+3-2k)!}
\frac{(2k)!}{(2r+2)!(2k-2r-2)!}(-1)^k\zeta(2k+1)\nonumber\\
=&-2\sum_{k=1}^{r+s+1}\binom{2k}{2s+1}
\frac{\pi^{2r+2s+2-2k}}{(2r+2s+3-2k)!}(-1)^k\Big(1-\frac{1}{4^k}\Big)\zeta(2k+1)\nonumber\\
&+2\sum_{k=1}^{r+s+1}\frac{\pi^{2r+2s+2-2k}}{(2r+2s+3-2k)!}
\binom{2k}{2r+2}(-1)^k\zeta(2k+1)\nonumber\\
=&2 \sum_{k=1}^{r+s+1}(-1)^k\left[\binom{2k}{2r+2}-\Big(1-\frac{1}{2^{2k}}\Big)
\binom{2k}{2s+1}\right]\frac{\pi^{2r+2s+2-2k}}{(2r+2s+3-2k)!}\zeta(2k+1).
\end{align}

Finally, noticing that by \eqref{eq1.1}, we have
\begin{align}\label{eq3.10}
\frac{\pi^{2r+2s+2-2k}}{(2r+2s+3-2k)!}=\zeta(\mathop{\underbrace{2,2,\cdots,2}}_{r+s+1-k}).
\end{align}
so putting \eqref{eq3.10} into \eqref{eq3.9} gives that
\begin{align*}
H(r,s)
=&2\sum_{k=1}^{r+s+1}(-1)^k
\left[\binom{2k}{2r+2}-\Big(1-\frac{1}{2^{2k}}\Big)\binom{2k}{2s+1}\right]
\zeta(\mathop{\underbrace{2,2,\cdots,2}}_{r+s+1-k})\zeta(2k+1)\\
=& 2\sum_{k=1}^{r+s+1}(-1)^kc^{(k)}_{r,s}\zeta(2k+1)
\zeta(\mathop{\underbrace{2,2,\ldots,2}}_{r+s+1-k})
\end{align*}
as expected.

This finishes the proof of Theorem \ref{theorem1.1}.   \hfill$\Box$\\

\section{Proof of Theorems \ref{theorem1.2}}
In this section, we use the lemmas presented in Section 2 to give the
proof of Theorem \ref{theorem1.2}.\\

\noindent {\it Proof of Theorem \ref{theorem1.2}.}
First of all, by Theorem \ref{theorem1.4} we know that for nonnegative
integers $r$ and $s$,
\begin{align*}
T(r,s)=\frac{-2}{(2r+1)!}\left(\frac{\pi}{2}\right)^{2r+2s+2}
\sum_{n=0}^{\infty}\frac{\zeta(2n)}{(2n+2r+1)(2n+2r+2)\cdots(2n+2r+2s+2)2^{2n}}.
\end{align*}

Let
\begin{align}\label{eq4.1}
&C_{r,s}:=\sum_{n=0}^{\infty}\frac{\zeta(2n)}
{(2n+2r+1)(2n+2r+2)\cdots(2n+2r+2s+2)2^{2n}}
\end{align}
Then
\begin{align}\label{eq4.2}
T(r,s)=\frac{-2}{(2r+1)!}\left(\frac{\pi}{2}\right)^{2r+2s+2}C_{r,s}.
\end{align}
Now we deal with the sum $C_{r,s}$. Applying Lemma \ref{lemma2.1} yields that
\begin{align}\label{eq4.3}
C_{r,s}
=&\sum_{n=0}^{\infty}\frac{\zeta(2n)}{2^{2n}}
\prod_{i=1}^{2s+2}\frac{1}{2n+2r+i}\nonumber\\
=&\sum_{n=0}^{\infty}\frac{\zeta(2n)}{2^{2n}}
\frac{1}{(2s+1)!}\sum_{i=1}^{2s+2}(-1)^{i-1}
\binom{2s+1}{i-1}\frac{1}{2n+2r+i}\nonumber\\
=&\frac{1}{(2s+1)!}\sum_{i=1}^{2s+2}(-1)^{i-1}\binom{2s+1}{i-1}
\sum_{n=0}^{\infty}\frac{\zeta(2n)}{(2n+2r+i)2^{2n}}\nonumber\\
:=&\frac{1}{(2s+1)!}D_{r,s}.
\end{align}

Secondly, let us calculate the sum $D_{r,s}$. By the definition of Kronecker's
symbol and using Lemma \ref{lemma2.3} with $p$ replaced by $2r+i$, we derive that
\begin{align}\label{4.4'}
D_{r,s}
=&-\frac{1}{2}\sum_{i=1}^{2s+2}(-1)^{i-1}\binom{2s+1}{i-1}
\Big(\log 2+\sum_{k=1}^{\lfloor r+i/2\rfloor}\frac{(2r+i)!(-1)^k(4^k-1)\zeta(2k+1)}
{(2r+i-2k)!(2\pi)^{2k}}\nonumber\\
&+\delta_{\lfloor r+i/2\rfloor,r+i/2}
\frac{(2r+i)!(-1)^{r+i/2}\zeta(2r+i+1)}{\pi^{2r+i}}\Big)\nonumber\\
=&-\frac{\log2}{2}\sum_{i=1}^{2s+2}(-1)^{i-1}\binom{2s+1}{i-1}
-\frac{1}{2}\sum_{i=1}^{2s+2}(-1)^{i-1}\binom{2s+1}{i-1}(2r+i)!\nonumber\\
&\times\Big(\sum_{k=1}^{\lfloor r+i/2\rfloor}\frac{(-1)^k(4^k-1)\zeta(2k+1)}
{(2r+i-2k)!(2\pi)^{2k}}+\delta_{\lfloor r+i/2\rfloor,r+i/2}
\frac{(-1)^{r+i/2}\zeta(2r+i+1)}{\pi^{2r+i}}\Big)\nonumber\\
=&\frac{1}{2}\mathop{\sum_{i=1}^{2s+2}}_{2|i}\binom{2s+1}{i-1}
(2r+i)!\Big(\sum_{k=1}^{r+i/2}\frac{(-1)^k(4^k-1)\zeta(2k+1)}
{(2r+i-2k)!(2\pi)^{2k}}\nonumber\\
&+\delta_{r+i/2,r+i/2}\frac{(-1)^{r+i/2}\zeta(2r+i+1)}{\pi^{2r+i}}\Big)\nonumber\\
&-\frac{1}{2}\mathop{\sum_{i=1}^{2s+2}}_{2\nmid i}\binom{2s+1}{i-1}
(2r+i)!\sum_{k=1}^{r+(i-1)/2}\frac{(-1)^k(4^k-1)\zeta(2k+1)}
{(2r+i-2k)!(2\pi)^{2k}}\nonumber\\
=&\frac{1}{2}\sum_{i'=1}^{s+1}\binom{2s+1}{2i'-1}(2r+2i')!
\sum_{k=1}^{r+i'}\frac{(-1)^k(4^k-1)\zeta(2k+1)}
{(2r+2i'-2k)!(2\pi)^{2k}}\nonumber\\
&+\frac{1}{2}\sum_{i'=1}^{s+1}\binom{2s+1}{2i'-1}
\delta_{r+i',r+i'}\frac{(2r+2i')!(-1)^{r+i'}\zeta(2r+2i'+1)}{\pi^{2r+2i'}}\nonumber\\
&-\frac{1}{2}\sum_{i'=1}^{s+1}\binom{2s+1}{2i'-2}(2r+2i'-1)!
\sum_{k=1}^{r+i'-1}\frac{(-1)^k(4^k-1)\zeta(2k+1)}
{(2r+2i'-1-2k)!(2\pi)^{2k}}.
\end{align}
Picking in Lemma \ref{lemma2.4}
$$
f(k):=\frac{(2k)!(-1)k\zeta(2k+1)}{\pi^{2k}},
$$
one obtains that
\begin{align}\label{4.5'}
\delta_{r+i',r+i'}\frac{(2r+2i')!(-1)^{r+i'}\zeta(2r+2i'+1)}{\pi^{2r+2i'}}
=\sum_{k=1}^{r+i'}\delta_{r+i',k}\frac{(2k)!(-1)^k\zeta(2k+1)}{\pi^{2k}}.
\end{align}
Putting \eqref{4.5'} into \eqref{4.4'}, it follows that
\begin{align*}
D_{r,s}=&\frac{1}{2}\sum_{i'=1}^{s+1}\binom{2s+1}{2i'-1}(2r+2i')!
\sum_{k=1}^{r+i'}\frac{(-1)^k(4^k-1)\zeta(2k+1)}
{(2r+2i'-2k)!(2\pi)^{2k}}\nonumber\\
&+\frac{1}{2}\sum_{i'=1}^{s+1}\binom{2s+1}{2i'-1}
\sum_{k=1}^{r+i'}\delta_{r+i',k}\frac{(2k)!(-1)^k\zeta(2k+1)}{\pi^{2k}}\nonumber\\
&-\frac{1}{2}\sum_{i'=1}^{s+1}\binom{2s+1}{2i'-2}(2r+2i'-1)!
\sum_{k=1}^{r+i'-1}\frac{(-1)^k(4^k-1)\zeta(2k+1)}
{(2r+2i'-1-2k)!(2\pi)^{2k}}\nonumber\\
=&\frac{1}{2}\sum_{i'=1}^{s+1}\binom{2s+1}{2i'-1}
\sum_{k=1}^{r+i'}\frac{(-1)^k(4^k-1)\zeta(2k+1)(2k)!}
{(2\pi)^{2k}}\frac{(2r+2i')!}{(2k)!(2r+2i'-2k)!}\nonumber\\
&+\frac{1}{2}\sum_{i'=1}^{s+1}\binom{2s+1}{2i'-1}
\sum_{k=1}^{r+i'}\delta_{r+i',k}\frac{(2k)!(-1)^k\zeta(2k+1)}{\pi^{2k}}\nonumber\\
&-\frac{1}{2}\sum_{i'=1}^{s+1}\binom{2s+1}{2i'-2}
\sum_{k=1}^{r+i'-1}\frac{(-1)^k(4^k-1)\zeta(2k+1)(2k)!}
{(2\pi)^{2k}}\frac{(2r+2i'-1)!}{(2k)!(2r+2i'-1-2k)!}\nonumber\\
=&\frac{1}{2}\sum_{i'=1}^{s+1}\binom{2s+1}{2i'-1}
\sum_{k=1}^{r+i'}\frac{(-1)^k(4^k-1)\zeta(2k+1)(2k)!}
{(2\pi)^{2k}}\binom{2r+2i'}{2k}\nonumber\\
&+\frac{1}{2}\sum_{i'=1}^{s+1}\binom{2s+1}{2i'-1}
\sum_{k=1}^{r+i'}\delta_{r+i',k}\frac{(2k)!(-1)^k\zeta(2k+1)}{\pi^{2k}}\nonumber\\
&-\frac{1}{2}\sum_{i'=1}^{s+1}\binom{2s+1}{2i'-2}
\sum_{k=1}^{r+i'-1}\frac{(-1)^k(4^k-1)\zeta(2k+1)(2k)!}
{(2\pi)^{2k}}\binom{2r+2i'-1}{2k}\nonumber\\
=&\frac{1}{2}\sum_{i'=1}^{s+1}\binom{2s+1}{2i'-1}
\sum_{k=1}^{r+i'}\frac{(-1)^k(4^k-1)\zeta(2k+1)(2k)!}
{(2\pi)^{2k}}\binom{2r+2i'}{2k}\nonumber\\
&+\frac{1}{2}\sum_{i'=1}^{s+1}\binom{2s+1}{2i'-1}
\sum_{k=1}^{r+i'}\delta_{r+i',k}\frac{(2k)!(-1)^k\zeta(2k+1)}{\pi^{2k}}\nonumber\\
&-\frac{1}{2}\sum_{i'=1}^{s+1}\binom{2s+1}{2i'-2}
\sum_{k=1}^{r+i'}\frac{(-1)^k(4^k-1)\zeta(2k+1)(2k)!}
{(2\pi)^{2k}}\binom{2r+2i'-1}{2k}\nonumber\\
&\text{\ (since $\binom{2r+2i'-1}{2k}=0$ for $k=r+i'$)}\nonumber\\
=&\frac{1}{2}\sum_{k=1}^{r+s+1}\frac{(-1)^k(4^k-1)\zeta(2k+1)(2k)!}
{(2\pi)^{2k}}\sum_{i'=\max(1,k-r)}^{s+1}\binom{2s+1}{2i'-1}
\binom{2r+2i'}{2k}\nonumber\\
&+\frac{1}{2}\sum_{k=1}^{r+s+1}\frac{(2k)!(-1)^k\zeta(2k+1)}{\pi^{2k}}
\sum_{i'=\max(1,k-r)}^{s+1}\binom{2s+1}{2i'-1}\delta_{r+i',k}\nonumber\\
&-\frac{1}{2}\sum_{k=1}^{r+s+1}\frac{(-1)^k(4^k-1)\zeta(2k+1)(2k)!}
{(2\pi)^{2k}}\sum_{i'=\max(1,k-r)}^{s+1}\binom{2s+1}{2i'-2}
\binom{2r+2i'-1}{2k}\nonumber\\
=&\frac{1}{2}\sum_{k=1}^{r+s+1}\frac{(-1)^k(4^k-1)\zeta(2k+1)(2k)!}
{(2\pi)^{2k}}\sum_{i'=\max(1,k-r)}^{s+1}\binom{2s+1}{2i'-1}
\binom{2r+2i'}{2k}\nonumber\\
&+\frac{1}{2}\sum_{k=r+1}^{r+s+1}\frac{(2k)!(-1)^k\zeta(2k+1)}{\pi^{2k}}
\sum_{i'=k-r}^{s+1}\binom{2s+1}{2i'-1}\delta_{r+i',k}
\text{\ (since $\delta_{r+i',k}=0$ for $1\le k \le r$)}\nonumber\\
&-\frac{1}{2}\sum_{k=1}^{r+s+1}\frac{(-1)^k(4^k-1)\zeta(2k+1)(2k)!}
{(2\pi)^{2k}}\sum_{i=\max(1,k-r)}^{s+1}\binom{2s+1}{2i'-2}
\binom{2r+2i'-1}{2k}\nonumber\\
=&\frac{1}{2}\sum_{k=1}^{r+s+1}\frac{(-1)^k(4^k-1)\zeta(2k+1)(2k)!}
{(2\pi)^{2k}}\sum_{i'=\max(1,k-r)}^{s+1}\binom{2s+1}{2i'-1}
\binom{2r+2i'}{2k}\nonumber\\
&-\frac{1}{2}\sum_{k=1}^{r+s+1}\frac{(-1)^k(4^k-1)\zeta(2k+1)(2k)!}
{(2\pi)^{2k}}\sum_{i'=\max(1,k-r)}^{s+1}\binom{2s+1}{2i'-2}
\binom{2r+2i'-1}{2k}\nonumber\\
&+\frac{1}{2}\sum_{k=r+1}^{r+s+1}\frac{(2k)!(-1)^k\zeta(2k+1)}{\pi^{2k}}
\sum_{i'=k-r}^{s+1}\binom{2s+1}{2i'-1}\delta_{r+i',k}\nonumber\\
=&\frac{1}{2}\sum_{k=1}^{r+s+1}\frac{(-1)^k(4^k-1)\zeta(2k+1)(2k)!}{(2\pi)^{2k}}\nonumber\\
&\times\sum_{i'=\max(1,k-r)}^{s+1}\Big(\binom{2s+1}{2i'-1}\binom{2r+2i'}{2k}
-\binom{2s+1}{2i'-2}\binom{2r+2i'-1}{2k}\Big)\nonumber\\
&+\frac{1}{2}\sum_{k=1}^{r+s+1}\frac{(2k)!(-1)^k\zeta(2k+1)}{\pi^{2k}}
\binom{2s+1}{2k-2r-1}
\text{\ (since $\binom{2s+1}{2k-2r-1}=0$ if $1\le k\le r$)}.
\end{align*}

Noting that
\begin{align*}
&\sum_{i'=\max(1,k-r)}^{s+1}\Big(\binom{2s+1}{2i'-1}\binom{2r+2i'}{2k}
-\binom{2s+1}{2i'-2}\binom{2r+2i'-1}{2k}\Big)\nonumber\\
=&\mathop{\sum_{i'=\max(2,2k-2r)-1}^{2s+1}}_{2\nmid i'}
\binom{2s+1}{i'}\binom{2r+i'+1}{2k}
-\mathop{\sum_{i'=\max(2,2k-2r)-2}^{2s}}_{2|i'}
\binom{2s+1}{i'}\binom{2r+i'+1}{2k}\nonumber\\
=&\sum_{i'=\max(2,2k-2r)-2}^{2s+1}(-1)^{i+1}\binom{2s+1}{i'}\binom{2r+i'+1}{2k},
\end{align*}
and that if $1\leq k\leq r$, then $k-r\leq 0$, and so $\max(1,k-r)=1$,
it then follows that
\begin{align}\label{eq4.4}
& D_{r,s}\nonumber\\
=&\frac{1}{2}\sum_{k=1}^{r+s+1}\frac{(-1)^k(4^k-1)\zeta(2k+1)(2k)!}{(2\pi)^{2k}}
\sum_{i'=\max(2,2k-2r)-2}^{2s+1}(-1)^{i'+1}\binom{2s+1}{i'}\binom{2r+i'+1}{2k}\nonumber\\
&+\frac{1}{2}\sum_{k=1}^{r+s+1}\frac{(2k)!(-1)^k\zeta(2k+1)}{\pi^{2k}}
\binom{2s+1}{2k-2r-1}\nonumber\\
=&\frac{1}{2}\sum_{k=1}^{r}\frac{(-1)^{k+1}(4^k-1)\zeta(2k+1)(2k)!}{(2\pi)^{2k}}
\sum_{i'=0}^{2s+1}(-1)^{i'}\binom{2s+1}{i'}\binom{2r+i'+1}{2k}\nonumber\\
&+\frac{1}{2}\sum_{k=r+1}^{r+s+1}\frac{(-1)^{k+1}(4^k-1)\zeta(2k+1)(2k)!}{(2\pi)^{2k}}
\sum_{i'=2k-2r-2}^{2s+1}(-1)^{i'}\binom{2s+1}{i'}\binom{2r+i'+1}{2k}\nonumber\\
&-\frac{1}{2}\sum_{k=1}^{r+s+1}\frac{(2k)!(-1)^{k+1}\zeta(2k+1)}{\pi^{2k}}
\binom{2s+1}{2k-2r-1}.
\end{align}
But applying Lemma \ref{lemma2.2} gives us that
\begin{align}\label{eq4.5}
\sum_{i'=0}^{2s+1}(-1)^{i'}\binom{2s+1}{i'}\binom{2r+i'+1}{2k}
=-\binom{2r+1}{2k-2s-1}
\end{align}
and
\begin{align}\label{eq4.6}
\sum_{i'=2k-2r-1}^{2s+1} (-1)^{i'} \binom{2s+1}{i'} \binom{2r+i'+1}{2k}
=-\binom{2r+1}{2k-2s-1}.
\end{align}
Hence it follows from putting \eqref{eq4.5} and \eqref{eq4.6} into \eqref{eq4.4} that
\begin{align}\label{eq4.7}
D_{r,s}=&-\frac{1}{2}\sum_{k=1}^{r}\frac{(-1)^{k+1}(4^k-1)
\zeta(2k+1)(2k)!}{(2\pi)^{2k}}\binom{2r+1}{2k-2s-1}\nonumber\\
&-\frac{1}{2}\sum_{k=r+1}^{r+s+1}\frac{(-1)^{k+1}(4^k-1)
\zeta(2k+1)(2k)!}{(2\pi)^{2k}}\binom{2r+1}{2k-2s-1}\nonumber\\
&-\frac{1}{2}\sum_{k=1}^{r+s+1}\frac{(2k)!(-1)^{k+1}\zeta(2k+1)}{\pi^{2k}}
\binom{2s+1}{2k-2r-1}\nonumber\\
=&-\frac{1}{2}\sum_{k=1}^{r+s+1}\frac{(-1)^{k+1}(4^k-1)
\zeta(2k+1)(2k)!}{(2\pi)^{2k}}\binom{2r+1}{2k-2s-1}\nonumber\\
&-\frac{1}{2}\sum_{k=1}^{r+s+1}\frac{(2k)!(-1)^{k+1}\zeta(2k+1)}{\pi^{2k}}
\binom{2s+1}{2k-2r-1}.
\end{align}
Now substituting \eqref{eq4.7} into \eqref{eq4.3}, we obtain that
\begin{align}\label{eq4.8}
C_{r,s}=&\frac{1}{(2s+1)!}D_{r,s}\nonumber\\
=&-\frac{1}{2(2s+1)!}\Big(\sum_{k=1}^{r+s+1}\frac{(-1)^{k+1}(4^k-1)
\zeta(2k+1)(2k)!}{(2\pi)^{2k}}\binom{2r+1}{2k-2s-1}\nonumber\\
&-\sum_{k=1}^{r+s+1}\frac{(2k)!(-1)^{k+1}\zeta(2k+1)}{\pi^{2k}}
\binom{2s+1}{2k-2r-1}\Big).
\end{align}
Then putting \eqref{eq4.8} into \eqref{eq4.2}, one gets that
\begin{align}\label{eq4.9}
&T(r,s)\nonumber\\
=&\frac{-2}{(2r+1)!}\Big(\frac{\pi}{2}\Big)^{2r+2s+2}C_{r,s}\nonumber\\
=&\frac{1}{(2r+1)!}\Big(\frac{\pi}{2}\Big)^{2r+2s+2}
\frac{1}{(2s+1)!}\Big(\sum_{k=1}^{r+s+1}\frac{(-1)^{k+1}(4^k-1)
\zeta(2k+1)(2k)!}{(2\pi)^{2k}}\binom{2r+1}{2k-2s-1}\nonumber\\
&+\sum_{k=1}^{r+s+1}\frac{(2k)!(-1)^{k+1}\zeta(2k+1)}{\pi^{2k}}
\binom{2s+1}{2k-2r-1}\Big)\nonumber\\
=&\frac{1}{(2r+1)!}\Big(\frac{\pi}{2}\Big)^{2r+2s+2}
\frac{1}{(2s+1)!}\sum_{k=1}^{r+s+1}\frac{(-1)^{k+1}
\zeta(2k+1)(2k)!}{\pi^{2k}}\Big(1-\frac{1}{4^k}\Big)\nonumber\\
&\times\frac{(2r+1)!}{(2k-2s-1)!(2r+2s+2-2k)!}\nonumber\\
&+\frac{1}{(2r+1)!}\Big(\frac{\pi}{2}\Big)^{2r+2s+2}
\frac{1}{(2s+1)!}\sum_{k=1}^{r+s+1}\frac{(2k)!(-1)^{k+1}\zeta(2k+1)}{\pi^{2k}}\nonumber\\
&\times\frac{(2s+1)!}{(2k-2r-1)!(2r+2s+2-2k)!}\nonumber\\
=&\sum_{k=1}^{r+s+1}(-1)^{k+1}
\zeta(2k+1)\Big(1-\frac{1}{4^k}\Big)
\frac{(2k)!}{(2s+1)!(2k-2s-1)!}
\frac{\pi^{2r+2s+2-2k}}{2^{2r+2s+2}(2r+2s+2-2k)!}\nonumber\\
&+\sum_{k=1}^{r+s+1}(-1)^{k+1}\zeta(2k+1)
\frac{(2k)!}{(2r+1)!(2k-2r-1)!}
\frac{\pi^{2r+2s+2-2k}}{2^{2r+2s+2}(2r+2s+2-2k)!}\nonumber\\
=&\sum_{k=1}^{r+s+1}(-1)^{k+1}\zeta(2k+1)\Big(1-\frac{1}{4^k}\Big)
\binom{2k}{2s+1}\frac{\pi^{2r+2s+2-2k}}{2^{2r+2s+2-2k}(2r+2s+2-2k)!}\nonumber\\
&+\sum_{k=1}^{r+s+1}(-1)^{k+1}\zeta(2k+1)\binom{2k}{2r+1}
\frac{\pi^{2r+2s+2-2k}}{2^{2r+2s+2}(2r+2s+2-2k)!}\nonumber\\
=&\sum_{k=1}^{r+s+1}(-1)^{k+1}\Big[\binom{2k}{2r+1}
+\Big(1-\frac{1}{2^{2k}}\Big)\binom{2k}{2s+1}\Big]
\frac{\pi^{2r+2s+2-2k}}{2^{2r+2s+2}(2r+2s+2-2k)!}\zeta(2k+1).
\end{align}

Finally, one has by \eqref{eq1.2} that
\begin{align}\label{eq4.10}
& \frac{\pi^{2r+2s+2-2k}}{2^{2r+2s+2}(2r+2s+2-2k)!}\nonumber\\
=&\frac{1}{2^{2k}}\frac{\pi^{2r+2s+2-2k}}{2^{2r+2s+2-2k}(2r+2s+2-2k)!}\nonumber\\
=&\frac{1}{2^{2k}}t(\mathop{\underbrace{2,2,\ldots,2}}_{r+s+1-k}).
\end{align}
Hence putting \eqref{eq4.10} into \eqref{eq4.9} yields that
\begin{align*}
& T(r,s)\nonumber\\
=&\sum_{k=1}^{r+s+1}(-1)^{k+1}\Big[\binom{2k}{2r+1}
+\Big(1-\frac{1}{2^{2k}}\Big)\binom{2k}{2s+1}\Big]\frac{1}{2^{2k}}
t(\mathop{\underbrace{2,2,\ldots,2}}_{r+s+1-k})\zeta(2k+1)\\
=& \sum_{k=1}^{r+s+1}(-1)^{k-1}d^{(k)}_{r,s}\frac{1}{2^{2k}}\zeta(2k+1)
t(\mathop{\underbrace{2,2,\ldots,2}}_{r+s+1-k})
\end{align*}
as one wants. 

This completes the proof of Theorem \ref{theorem1.2}.
\hfill$\Box$\\

 \end{document}